\numberwithin{equation}{section}
\renewcommand{\-}{\hbox{-}}
\newtheorem{theorem}{Theorem}[section]
\newtheorem{assumption}[theorem]{Assumption}
\newtheorem{lemma}[theorem]{Lemma}
\theoremstyle{definition}
\newtheorem{definition}[theorem]{Definition}
\journal{}
\begin{document}

\begin{frontmatter}
\title{Asymptotic behavior of the wave equation with nonlocal weak damping, anti-damping and critical nonlinearity}

\author[label1]{Chunyan Zhao}
\ead{emmanuelz@163.com}
\author[label1]{Chengkui Zhong\corref{cor1}}
\ead{ckzhong@nju.edu.cn}
\author[label1]{Zhijun Tang}
\ead{tzj960629@163.com}
\address[label1]{Department of Mathematics, Nanjing University, Nanjing, 210093, China}

\cortext[cor1]{Corresponding author.}
\begin{abstract}
In this paper, we first prove an abstract theorem on the existence of polynomial attractors and the concrete estimate of their attractive velocity for infinite-dimensional dynamical systems, then apply this theorem to a class of wave equations with nonlocal weak damping and anti-damping in case that the nonlinear term~$f$~is of subcritical growth.
\end{abstract}

\begin{keyword}
Wave equation\sep nonlocal weak damping\sep nonlocal weak anti-damping\sep critical nonlinearity\sep global attractor.

\MSC[2010] 35B40\sep 35B41 \sep 35L05
\end{keyword}

\end{frontmatter}

\section{Introduction}
In this paper, we investigate the existence of the global attractor for the wave equation with nonlocal weak damping, nonlocal weak anti-damping and critical nonlinear source term
\begin{align}
\label{wave equa}&u_{tt}-\Delta u+k||u_{t}||_{L^2(\Omega)}^p u_t+f(u)
=\displaystyle\int_{\Omega}K(x,y)u_{t}(y)dy+h(x) \ \ \text{in}\ [0,\infty)\times\Omega,\\
\label{boundary condition}&u=0 \ \text{on}\  [0,\infty)\times \partial\Omega,\\
\label{initial condition}&u(x,0)=u_0(x) , u_{t}(x,0)=u_{1}(x),\  x\in\Omega,
\end{align}
where $\Omega$ is a bounded domain in $\mathbb{R}^N(N\geq3)$ with smooth boundary $\partial\Omega$ and the following assumption holds:

\begin{assumption}\label{21-8-29-8}
\begin{itemize}
  \item [(i)]~$k$ and~$p$ are positive constants,~$K\in L^2(\Omega\times\Omega)$,~$h\in L^2(\Omega)$;
  \item [(ii)]~$f\in C^1(\mathbb{R})$~satisfies
  \begin{align}
\label{growth}&|f'(s)| \leq M(|s|^{\frac{2}{N-2}}+1),\\
\label{dissipativity condition}&\liminf_{|s|\rightarrow\infty}f'(s)\equiv \mu > -\lambda_{1},
\end{align}
where~$M\geq0$ and~$\lambda_{1}$ is the first eigenvalue of the operator~$-\Delta$ equipped with Dirichlet boundary condition.
\end{itemize}
\end{assumption}

Since the pioneering work of J.K. Hale et al. \cite{MR0466837}  on the dynamical behavior of dissipative wave equations in the 1970s, there has been a wealth of literature on the asymptotic state (with particular reference to  existence of the global attractor,  estimate of
its fractal dimension  and  existence of exponential attractors) of solutions of wave equations with various damping. Among them,  we refer  to \cite{Arrieta,Babin1992,MR2026182,MR2505696,MR1868930,Ghidaglia1987,MR2018135,MR908897,Ladyzhenskaya1991,Raugel,MR2455195,TEMAM}  for the wave equation with weak damping ~$ku_t$~which models
the oscillation process  occurring in many  physical systems, including electrodynamics, quantum mechanics, nonlinear elasticity, etc.
Wave equations with strong damping~$-k\Delta u_t$~(see \cite{patazelik} for their physical background) were studied in \cite{BP,MR1972247,Ghidaglia,Ghidaglia1987,patasquassina,patazelik}.  Literatures \cite{Feireisl,Feireisl2,MR1318582,MR1242226,MR1078761,MR2269940,MR1150828,MR2237675} were devoted to wave equations with nonlinear damping $g(u_t)$. The damping $(-\Delta)^{\alpha}u_t\ (\alpha\in(0,1))$ is called fractional damping. In particular,  it is referred to as structural damping when $1/2\leq\alpha<1$ and as moderate damping when $0\leq\alpha<1/2$. Studies related to wave equations with fractional damping can be found in~\cite{MR1972247,yangzhijiandingpengyan,yangzhijian} and references therein.

On the other hand, the long-time behavior of hyperbolic equations with nonlocal damping also has received great attention. For example, we refer to \cite{chueshov1013,yangzhijiandingpengyan1013} for the study of the Kirchhoff equation with the damping $M(||\nabla u||^2_{L^2(\Omega)})\Delta u_t$, to \cite{CavalcantiSilva,MR1608033} for the case of nonlocal weak damping $M(||\nabla u||_{L^2(\Omega)}^2)u_t$, to \cite{SilvaNarciso1013} for the case of the damping $M(||\nabla u||^2_{L^2(\Omega)})g(u_t)$, and to \cite{Chueshovstructural,SilvaNarciso10131} for the case of the damping  $M(||\nabla u||^2_{L^2(\Omega)})(-\triangle)^{\theta}u_t$. The damping terms involved in the references listed above all have Kirchhoff type coefficients~$M(||\nabla u||^2_{L^2(\Omega)})$.
In addition, Lazo \cite{MR2426592} proved the existence of a global solution to the equation
  \begin{equation*}
  u_{tt}+M(|A^{\frac{1}{2}}u|^2)Au+N(|A^{\alpha}u|^2)A^{\alpha}u_t=f,
\end{equation*}
where~$A$~is a positive self-adjoint operator defined in Hilbert space~$H$, $\alpha\in(0,1]$~and the functions~$M,N$~satisfy the nondegenerate condition.

While, to the best of our knowledge, only very few results are available for damped hyperbolic equations whose nonlocal damping coefficient depends on~$u_t$. Among them we highlight that in 1989 Balakrishnan and Taylor \cite{BalakrishnanTaylor} presented some extensible beam equations with nonlocal energy damping {\small$\left[\displaystyle\int_{\Omega}(|\Delta u|^{2}+|u_{t}|^{2})dx\right]^{q}\Delta u _{t}$} to model  the damping phenomena in flight structures.
 Recently Silva, Narciso and Vicente \cite{SilvaNarcisoVicente} have proved the global well-posedness, polynomial stability of the following beam model with the nonlocal energy damping
\begin{equation*}
u_{tt}-\kappa\Delta u+\Delta^{2}u-\gamma\left[\displaystyle\int_{\Omega}(|\Delta u|^{2}+|u_{t}|^{2})dx\right]^{q}\Delta u _{t}+f(u)=0.
\end{equation*}
Lazo \cite{Lazo} considered the local solvability of the wave equation
\begin{equation*}
  u_{tt}-M(\|\nabla u\|_{L^2(\Omega)}^2)\triangle u +N(\|u_t\|_{L^2(\Omega)}^2)u_t=b|u|^{p-1}u.
\end{equation*}

We are motivated by the literature mentioned above to study problem $(\ref{wave equa})$-$(\ref{initial condition})$ in our last work \cite{my1}. As far as we know, this constituted the first result  on the long-time behavior of wave equation with nonlocal damping~$ k||u_{t}||_{L^2(\Omega)}^p u_t$. In \cite{my1},  we have proved via the method of Condition (C) that  the system possesses a global attractor in the case that~$f$~satisfies the  subcritical growth condition. However, we did not solve the  problem of the existence of the global attractor for the critical case, which is the aim of the present paper.

Problem $(\ref{wave equa})$-$(\ref{initial condition})$ is a weakly damped model, in which the nonlocal coefficient~$k||u_{t}||^p$~reflects the effect of kinetic energy on damping in physics. The term $\int_{\Omega}K(x,y)u_{t}(y)dy$~is an anti-damping because it may provide energy. The difficulty of this problem lies first in  the nondegenerate, nonlocal coefficient of damping and the arbitrariness of the exponent $p>0$. Due to the influence of nonlocal coefficient~$k||u_{t}||^p$, when the velocity~$u_t$~is very small, the nonlocal damping is weaker than the linear damping. Furthermore, as  the velocity~$u_t$~is  smaller and~$p$ is larger, the damping is weaker and thus energy dissipation is  slower. In addition, the presence of the anti-damping term leads to the energy not decreasing along the orbit, and moreover, the effect of energy supplement brought by the anti-damping term needs to be overcome by the damping. All these factors cause difficulties in studying the long-term behavior of this model. At the same time, since~$f$~is of critical growth,  the corresponding Sobolev embedding is no longer compact, which makes all the methods based on compactness, including Condition~(C), no longer available to prove  the existence of the global attractor.

In this paper, to overcome the difficulty of lack of compactness in the critical case, we employ the criterion of asymptotic smoothness relying on the repeated inferior limit ~(see Lemma $\ref{lemma 1-9-1}$ below)~to prove the existence of the global attractor. Chueshov and Lasiecka \cite{Chueshov2008,Chueshov2010} proposed this criterion based on the idea of Khanmamdov \cite{Khanmamedov2006}. To handle the difficulty that nonlocal damping coefficient~$\|u_t\|^p$~brings in energy estimate, we use the strong monotone inequality for the general inner product space~(see Lemma $\ref{lemma l2.3}$ below). The proof of compactness  borrows  many ideas from \cite{Chueshov2008}.

As for the dissipativity,  A. Haraux \cite{Haraux}  obtained via barrier's method the uniform bound of the energy in terms of the initial energy for a dissipative wave equation with anti-damping. The key element of the method is that the dissipative term in the inequality for Lyapunov's function has a coefficient sublinearly dependent on the energy. I. Chueshov and I. Lasiecka \cite{Chueshov2008} further proved that systems whose Lyapunov's functions satisfy such inequalities are ultimately dissipative. Their strategy was to select the perturbation parameter $\epsilon$ in the energy inequality as a suitable function of the initial energy  according to the sublinear dependence of the coefficient of the dissipation term on the energy; and thus they deduced that the energy is ultimately bounded by a constant independent on the initial data. Following the method in \cite{Chueshov2008}, we prove the  dissipativity for problem $(\ref{wave equa})$-$(\ref{initial condition})$.

The establishment of the global well-posedness  follows the idea in \cite{Chueshov1,Chueshov2008}.

This paper is organized as follows. In Section $2$, we present
some notations and lemmas which will be needed later. In Section $3$ and Section $4$, we prove the global well-posedness and dissipativity of the dynamical system generated by problem $(\ref{wave equa})$-$(\ref{initial condition})$, respectively. In Section $5$, we establish the existence of the global attractor for this system.
\section{Preliminaries}
Throughout this paper, we will denote the inner product and the norm on $L^2(\Omega)$ by $(\cdot,\cdot)$ and $\|\cdot\|$, respectively, and the norm on $L^p(\Omega)$ by $\|\cdot\|_p$. The symbol $\mathcal{A}$ denotes the strictly positive operator on $L^2(\Omega)$ defined by $\mathcal{A}=-\triangle$ with domain $D(\mathcal{A})=H^2(\Omega)\cap H^1_0(\Omega)$.
The symbols $\hookrightarrow$ and $\hookrightarrow\hookrightarrow$ stand for continuous embedding and compact embedding, respectively.
The capital letter ``C" with a (possibly empty) set of subscripts will denote
a positive constant depending only on its subscripts and may vary from one
occurrence to another. And we write

\ \vspace*{-10pt} \[\Psi(u_t(t,x))=\displaystyle\int_{\Omega}K(x,y)u_t(t,y)dy.\]

In later sections, we will use the following Sobolev embeddings:
\[H^1_0(\Omega)\hookrightarrow L^{\frac{2N}{N-2}}(\Omega),~H^s(\Omega)\hookrightarrow L^{\frac{2N}{N-2s}(\Omega)}~(s\in(0.1)).\]

We then present some preliminaries.

\begin{definition}\cite{Chueshov2008}
Let~$\{S(t)\}_{t\geq0}$~be a semigroup on a complete metric space~$(X,d)$. A closed set~$\mathcal{B}\subseteq X$~is said to be absorbing for~$\{S(t)\}_{t\geq0}$~iff for any bounded
set~$B\subseteq X$~there exists~$t_0(B)$ such that~$S(t)B\subseteq \mathcal{B}$~for all~$t>t_0(B)$. The semigroup
 $\{S(t)\}_{t\geq0}$~is said to be dissipative iff it possesses a bounded absorbing set.
\end{definition}

\begin{definition} \cite{Chueshov2008}\label{def20-11-11-1}
A compact set~$\mathcal{A}\subseteq X$~is said to be a global attractor of the dynamical system~$(X, \{S(t)\}_{t\geq0})$~iff
\begin{enumerate}[(i)]
  \item~$\mathcal{A}\subseteq X$~is an invariant set, i.e.,~$S(t)\mathcal{A}=\mathcal{A}$~for all~$t\geq0$;
  \item~$\mathcal{A}\subseteq X$~is uniformly attracting, i.e., for all bounded set~$B\subseteq X$~we have
  \begin{equation*}
\lim_{t\rightarrow +\infty}\mathrm{dist}(S(t)B,\mathcal{A})=0.
\end{equation*}
Here and below~$\mathrm{dist}(A,B):=\sup_{x\in A}\mathrm{dist}_{X}(x,B)$~is the Hausdorff\break semi-distance.
\end{enumerate}
\end{definition}

\begin{definition} \cite{Chueshov2008}
A dynamical system~$(X, \{S(t)\}_{t\geq0})$
is said to be asymptotically smooth iff for any bounded set $B$ such that $S(t)B\subseteq B$ for $t>0$ there exists a compact set $K$ in the closure $\overline{B}$ of $B$, such that
  \begin{equation*}
\lim_{t\rightarrow +\infty}\mathrm{dist}(S(t)B,K)=0.
\end{equation*}
\end{definition}

\begin{lemma}\cite{Chueshov2008}\label{lemma 1-14-10}
Let $\big(X,\{S(t)\}_{t\geq0}\big)$ be a dissipative dynamical system, where the phase space $X$ is a complete metric space. Then $\big(X,\{S(t)\}_{t\geq0}\big)$~possesses a  global attractor if and only if $\big(X,\{S(t)\}_{t\geq0}\big)$~is asymptotically smooth.
\end{lemma}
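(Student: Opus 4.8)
The plan is to prove the two implications separately, with the construction of the attractor from asymptotic smoothness being the substantive half. Throughout I work with the $\omega$-limit set of a bounded set $B$,
\[
\omega(B)=\bigcap_{\tau\geq0}\overline{\bigcup_{t\geq\tau}S(t)B},
\]
equivalently the set of all $y\in X$ for which there exist $t_n\to\infty$ and $x_n\in B$ with $S(t_n)x_n\to y$. I use freely that each $S(t)$ is continuous on $X$ and that $S(t)$ carries a bounded set over a finite time interval to a bounded set, these being part of the standing framework for the dynamical system.

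For the \emph{necessity} direction, suppose the global attractor $\mathcal{A}$ exists and let $B$ be any bounded set with $S(t)B\subseteq B$ for $t>0$. I would take $K=\omega(B)$. Since $\mathcal{A}$ attracts the bounded set $B$, every sequence $S(t_n)x_n$ with $x_n\in B$ and $t_n\to\infty$ eventually lies in arbitrarily small neighborhoods of the compact set $\mathcal{A}$, hence admits a convergent subsequence. This precompactness shows at once that $\omega(B)$ is nonempty, that $\omega(B)\subseteq\mathcal{A}$ so that $\omega(B)$ is compact, and, by a standard contradiction argument, that $\mathrm{dist}(S(t)B,\omega(B))\to0$. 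Positive invariance of $B$ gives $\bigcup_{t\geq\tau}S(t)B\subseteq\overline{B}$ and hence $\omega(B)\subseteq\overline{B}$, so $K=\omega(B)$ is the required compact attracting subset of $\overline{B}$.

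For the \emph{sufficiency} direction I would first upgrade the absorbing set to a positively invariant one, since asymptotic smoothness is only assumed on positively invariant bounded sets. Starting from a bounded absorbing set $\mathcal{B}_0$, I choose $t_0$ with $S(t)\mathcal{B}_0\subseteq\mathcal{B}_0$ for $t\geq t_0$ and set $B=\overline{\bigcup_{t\geq0}S(t)\mathcal{B}_0}$. This set is bounded, because the tail $t\geq t_0$ lies in $\mathcal{B}_0$ while the finite piece $0\leq t\leq t_0$ is bounded by the finite-time boundedness of orbits; it is absorbing since $\mathcal{B}_0\subseteq B$; and continuity of $S(\tau)$ yields $S(\tau)B\subseteq B$. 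Applying asymptotic smoothness to $B$ produces a compact set $K\subseteq\overline{B}$ with $\mathrm{dist}(S(t)B,K)\to0$, which furnishes the precompactness of orbits emanating from $B$. I then claim that $\mathcal{A}:=\omega(B)$ is the global attractor.

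The verification proceeds along classical lines. Nonemptiness and compactness of $\omega(B)$ follow from the precompactness above together with $\omega(B)\subseteq K$; the inclusion $S(\tau)\omega(B)\subseteq\omega(B)$ is immediate from continuity of $S(\tau)$, while the reverse inclusion $\omega(B)\subseteq S(\tau)\omega(B)$ uses precompactness to extract from $S(t_n-\tau)x_n$ a convergent subsequence whose limit $z\in\omega(B)$ satisfies $S(\tau)z=y$, yielding the invariance $S(\tau)\omega(B)=\omega(B)$. That $\omega(B)$ attracts every bounded set $D$ follows because $D$ is absorbed into $B$ after finite time, so its orbit eventually lies in $\bigcup_{t\geq\tau}S(t)B$ and is therefore attracted to $\omega(B)$ by a contradiction argument identical to the one used in the necessity direction. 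The hard part will be this sufficiency direction, and within it the two delicate points are the reverse inclusion in the invariance proof, which genuinely requires the precompactness extracted from asymptotic smoothness rather than mere continuity, and the reduction of the attraction of \emph{arbitrary} bounded sets to the single positively invariant set $B$ on which asymptotic smoothness was invoked.
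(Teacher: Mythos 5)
This lemma is cited in the paper from \cite{Chueshov2008} without proof, so the comparison is with the classical argument of that reference; your proof is essentially that argument, and both directions are sound. In the necessity direction, $\omega(B)$ of a positively invariant bounded set $B$ is indeed compact (being closed and contained in the attractor), lies in $\overline{B}$, and attracts $B$ by the contradiction argument you describe. In the sufficiency direction, taking $\mathcal{A}=\omega(B)$ for a bounded, positively invariant, absorbing set $B$ is the standard route, and you correctly identify where asymptotic smoothness is genuinely needed: it supplies the precompactness of sequences $S(t_n)x_n$ used for nonemptiness, for the reverse inclusion $\omega(B)\subseteq S(\tau)\omega(B)$ in the invariance proof, and for the attraction of arbitrary bounded sets after they are absorbed into $B$.

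One point deserves attention. Your construction $B=\overline{\bigcup_{t\geq0}S(t)\mathcal{B}_0}$ of the positively invariant absorbing set invokes finite-time boundedness of orbits (that $S(t)$ carries a bounded set over a finite time interval to a bounded set). This is neither part of the paper's definition of a dynamical system nor a consequence of continuity of each $S(t)$ on a general metric space, so as written your sufficiency proof uses a hypothesis the lemma does not grant. The assumption is avoidable: with $t_0$ the time after which $\mathcal{B}_0$ absorbs itself, set $B_1=\bigcup_{t\geq t_0}S(t)\mathcal{B}_0$. Then $B_1\subseteq\mathcal{B}_0$ is bounded; $S(\tau)B_1=\bigcup_{t\geq t_0+\tau}S(t)\mathcal{B}_0\subseteq B_1$ gives positive invariance; and $B_1$ is absorbing, since for any bounded $D$ and $t\geq t_D+t_0$ one has $S(t)D=S(t-t_D)S(t_D)D\subseteq S(t-t_D)\mathcal{B}_0\subseteq B_1$. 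With this substitution (asymptotic smoothness, as defined in the paper, does not require the positively invariant set to be closed), your argument goes through verbatim and needs nothing beyond the paper's stated definitions.
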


\begin{lemma}\cite{Chueshov2010}\label{lemma 1-9-1}
Let $\big(X,\{S(t)\}_{t\geq0}\big)$ be a dynamical system, where the phase space $X$ is a complete metric space. Assume that for any bounded positively invariant set $B$ in $X$ and any $\epsilon>0$ there exists $T\equiv T(\epsilon,B)$ such that
\begin{equation}\label{22-2-18-41}
  \liminf_{m\rightarrow\infty} \liminf_{n\rightarrow\infty}\mathrm{dist}(S(T)y_n,S(T)y_m)\leq\epsilon\  \text{for every sequence}\ \{y_n\}\subseteq B.
  \end{equation}
  Then $\big(X,\{S(t)\}_{t\geq0}\big)$ is asymptotically smooth.
\end{lemma}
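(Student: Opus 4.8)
The plan is to exhibit, for each bounded positively invariant set $B$, the $\omega$-limit set $\omega(B)=\bigcap_{s\ge 0}\overline{\bigcup_{t\ge s}S(t)B}$ as the compact set $K$ required by the definition of asymptotic smoothness. The route to this is to first upgrade the hypothesis \eqref{22-2-18-41} into genuine \emph{asymptotic compactness}: every sequence of the form $S(\tau_n)x_n$ with $x_n\in B$ and $\tau_n\to\infty$ admits a subsequence converging to a point of $\overline{B}$.

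First I would reduce the varying-time situation to the fixed-time hypothesis. Given $z_n=S(\tau_n)x_n$ with $\tau_n\to\infty$, fix $\epsilon>0$ together with the corresponding $T=T(\epsilon,B)$; for all large $n$ one has $\tau_n\ge T$, so setting $y_n:=S(\tau_n-T)x_n$ and using positive invariance $S(\tau_n-T)B\subseteq B$ yields $y_n\in B$ and $z_n=S(T)y_n$. Since \eqref{22-2-18-41} holds for the sequence $\{y_n\}\subseteq B$, and finite truncation leaves the repeated inferior limit unchanged, I obtain $\liminf_{m}\liminf_{n}\mathrm{dist}(z_n,z_m)\le\epsilon$. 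This reduction applies to every $\epsilon>0$ and, crucially, to every subsequence of $\{z_n\}$ (each being again of the form $S(\tau)x$ with times tending to infinity), so the hypothesis may be reinvoked at will along the construction below.

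The heart of the argument is a diagonal extraction of a Cauchy subsequence. Taking $\epsilon_k=2^{-k}$, I build nested subsequences $\{z_n^{(k)}\}$: at stage $k$, applying the reduction with $\epsilon_k$ to $\{z_n^{(k-1)}\}$ gives $\liminf_m\liminf_n \mathrm{dist}(z_n^{(k-1)},z_m^{(k-1)})\le \epsilon_k$, which first furnishes a ``center'' index $m_k$ with $\liminf_n \mathrm{dist}(z_n^{(k-1)},z_{m_k}^{(k-1)})$ small, and then an infinite sub-collection $\{z_n^{(k)}\}$ whose terms all lie within $3\epsilon_k$ of $c_k:=z_{m_k}^{(k-1)}$, hence pairwise within $6\epsilon_k$. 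The diagonal sequence $w_k$, chosen with strictly increasing original indices, has the feature that for $l>k$ both $w_k$ and $w_l$ are terms of $\{z_n^{(k)}\}$, so $\mathrm{dist}(w_k,w_l)\le 6\cdot 2^{-k}$; thus $\{w_k\}$ is Cauchy and, by completeness of $X$, converges to some $w\in\overline{B}$. This establishes asymptotic compactness.

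Finally I would close by the standard $\omega$-limit argument. Asymptotic compactness makes $\omega(B)$ nonempty and compact (any sequence in $\omega(B)$ is approximated by points $S(\tau_j)x_j$ with $\tau_j\to\infty$, whose convergent subsequence drags along a convergent subsequence in the closed set $\omega(B)$), and it forces $\omega(B)$ to attract $B$: were $\mathrm{dist}(S(t)B,\omega(B))\not\to 0$, some $S(t_n)x_n$ would stay at distance $\ge\delta$ from $\omega(B)$, yet a convergent subsequence of it has limit inside $\omega(B)$, a contradiction. Taking $K=\omega(B)\subseteq\overline{B}$ then gives $\lim_{t\to\infty}\mathrm{dist}(S(t)B,K)=0$, which is exactly asymptotic smoothness. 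I expect the main obstacle to be the bookkeeping in the diagonal step—namely ensuring that the \emph{repeated} inferior limit (not a single limit) still produces an honestly Cauchy diagonal subsequence—for which the decisive point is keeping every nested subsequence genuinely of the form $S(\tau)x$ with $\tau\to\infty$, so that \eqref{22-2-18-41} can be legitimately reapplied at each stage.
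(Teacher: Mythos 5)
This lemma is not proved in the paper at all: it is quoted from Chueshov--Lasiecka \cite{Chueshov2010} as a known criterion, so there is no internal argument to compare against. Your proof is correct, and it is essentially the standard proof from the cited literature (based on Khanmamedov's idea): reduce variable times to the fixed-time hypothesis by writing $S(\tau_n)x_n=S(T)\,S(\tau_n-T)x_n$ and invoking positive invariance, extract a Cauchy subsequence by the diagonal procedure driven by the repeated inferior limit with $\epsilon_k=2^{-k}$, conclude asymptotic compactness from completeness of $X$, and finally take $K=\omega(B)\subseteq\overline{B}$. The two delicate points are handled properly in your write-up: first, truncating the sequence to indices with $\tau_n\geq T$ discards only finitely many terms (because $\tau_n\to\infty$), so the repeated $\liminf$ is unaffected; second, the Cauchy bound $\mathrm{dist}(w_k,w_l)\leq 6\cdot 2^{-k}$ for $l>k$ relies only on both diagonal terms lying in the $k$-th nested subsequence, and your insistence that every nested subsequence is again of the form $S(\tau)x$ with times tending to infinity is exactly what legitimizes reapplying the hypothesis at each stage. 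The concluding $\omega$-limit argument (nonemptiness, compactness, and attraction by contradiction) is routine and correctly sketched.
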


\begin{lemma}\cite{MR4064014}\label{lemma l2.3}
Let $\left(H,(\cdot,\cdot)_H\right)$ be an inner product space with the induced norm $\|\cdot\|_H$ and constant $p>1$. Then there exists some positive constant~$C_p$~such that for any $x,y\in H$ satisfying $(x,y)\neq (0,0)$, we have
\begin{eqnarray}\label{2.11}
&\big(\|x\|_H^{p-2}x-\|y\|_H^{p-2}y,x-y\big)_H
&\geq
 \begin{cases}
 C_p\|x-y\|_H^{p}, ~  p\geq2;\\
C_p\displaystyle\frac{\|x-y\|_H^{2}}{(\|x\|_H+\|y\|_H)^{2-p}},~1<p<2.
 \end{cases}
\end{eqnarray}
\end{lemma}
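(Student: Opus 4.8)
The plan is to view $F(z):=\|z\|_H^{p-2}z$ (with $F(0):=0$) as the gradient of the convex functional $z\mapsto\frac1p\|z\|_H^p$ and to control its increment along the line segment from $y$ to $x$. Everything takes place inside the subspace $\mathrm{span}\{x,y\}$, which has dimension at most two, and both sides of the asserted inequality are positively homogeneous of degree $p$ in the pair $(x,y)$, so one may normalize as convenient. Writing $w:=x-y$ and $\gamma(t):=y+tw$, the first step is to record that $F$ is differentiable at every $z\neq0$ with
\[(F'(z)h,h)_H=\|z\|_H^{p-2}\|h\|_H^2+(p-2)\|z\|_H^{p-4}(z,h)_H^2,\]
and then to derive the representation
\[\big(\|x\|_H^{p-2}x-\|y\|_H^{p-2}y,\,x-y\big)_H=\int_0^1\big(F'(\gamma(t))w,w\big)_H\,dt.\]

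The first point requiring care, and the step I expect to be the main obstacle, is the validity of this integral identity when the segment $\gamma$ meets the origin, which can occur for at most one parameter value $t_0\in[0,1]$. I would argue that $g(t):=(F(\gamma(t)),w)_H$ is continuous on all of $[0,1]$ — since $|g(t)|\le\|\gamma(t)\|_H^{\,p-1}\|w\|_H\to0$ as $\gamma(t)\to0$ — is continuously differentiable off $t_0$ with $g'(t)=(F'(\gamma(t))w,w)_H$, and that $g'$ is absolutely integrable across $t_0$ because $\|\gamma(t)\|_H^{\,p-2}\sim\|w\|_H^{\,p-2}|t-t_0|^{p-2}$ is integrable precisely when $p>1$. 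Continuity together with integrability of the a.e.\ derivative yields absolute continuity of $g$, hence the fundamental theorem of calculus and the identity above.

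With the identity in hand I bound the integrand. For $p\ge2$ the second summand of $(F'(z)w,w)_H$ is nonnegative, so $(F'(z)w,w)_H\ge\|z\|_H^{p-2}\|w\|_H^2$; for $1<p<2$ the negative summand is dominated via Cauchy--Schwarz, $(z,w)_H^2\le\|z\|_H^2\|w\|_H^2$, giving $(F'(z)w,w)_H\ge(p-1)\|z\|_H^{p-2}\|w\|_H^2$. In either case the problem reduces to bounding $\int_0^1\|\gamma(t)\|_H^{\,p-2}\,dt$ from below. Completing the square gives $\|\gamma(t)\|_H^2=\|\gamma(t^\ast)\|_H^2+\|w\|_H^2(t-t^\ast)^2$ with $t^\ast=-(y,w)_H/\|w\|_H^2$, so in particular $\|\gamma(t)\|_H\ge\|w\|_H|t-t^\ast|$, while the triangle inequality gives the companion upper bound $\|\gamma(t)\|_H\le(1-t)\|y\|_H+t\|x\|_H\le\|x\|_H+\|y\|_H$.

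For $p\ge2$ the lower bound yields $\int_0^1\|\gamma(t)\|_H^{p-2}dt\ge\|w\|_H^{p-2}\int_0^1|t-t^\ast|^{p-2}dt$, and minimizing $\int_0^1|t-t^\ast|^{p-2}dt$ over $t^\ast\in\mathbb{R}$ — the minimum sitting at $t^\ast=\frac12$ by convexity of $s\mapsto s^{p-1}$ — produces the explicit value $\frac{2^{2-p}}{p-1}$; hence the left-hand side is at least $\frac{2^{2-p}}{p-1}\|w\|_H^p$, which is the claim with $C_p=\frac{2^{2-p}}{p-1}$. For $1<p<2$ the exponent $p-2$ is negative, so I instead use the upper bound on $\|\gamma(t)\|_H$ to get $\|\gamma(t)\|_H^{p-2}\ge(\|x\|_H+\|y\|_H)^{p-2}$, whence $\int_0^1\|\gamma(t)\|_H^{p-2}dt\ge(\|x\|_H+\|y\|_H)^{p-2}$; combined with the integrand estimate this bounds the left-hand side below by $(p-1)\|w\|_H^2(\|x\|_H+\|y\|_H)^{p-2}$, i.e.\ the claim with $C_p=p-1$. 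The degenerate configurations in which one of $x,y$ vanishes are covered directly, since there the left-hand side equals $\|x\|_H^p$ (or $\|y\|_H^p$) and both right-hand sides reduce to a constant multiple of the same quantity.
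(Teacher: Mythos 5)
Your proof is correct, and it is necessarily a different route from the paper's, because the paper contains no proof of this lemma at all: inequality $(\ref{2.11})$ is quoted from \cite{MR4064014}, with the $\mathbb{R}^N$ case attributed to \cite{Perai,Simon1978}, and the natural path there is to observe that the statement only involves the (at most two-dimensional) subspace $\mathrm{span}\{x,y\}$ and to invoke the known finite-dimensional inequality. What your argument buys is a self-contained, elementary proof with explicit constants: you integrate the quadratic form of $F'(z)$, $F(z)=\|z\|_H^{p-2}z$, along the segment $\gamma(t)=y+t(x-y)$, bound it below by $\|\gamma(t)\|_H^{p-2}\|w\|_H^2$ for $p\geq2$ (resp.\ by $(p-1)\|\gamma(t)\|_H^{p-2}\|w\|_H^2$ via Cauchy--Schwarz when $1<p<2$), and then estimate $\int_0^1\|\gamma(t)\|_H^{p-2}\,dt$ by completing the square; this produces $C_p=2^{2-p}/(p-1)$ for $p\geq2$ and $C_p=p-1$ for $1<p<2$, constants the citation route never displays.

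Two points should be tightened, though neither is a genuine gap. First, the principle you appeal to --- continuity plus integrability of the a.e.\ derivative implies absolute continuity --- is false in general (the Cantor function is a counterexample); what makes your identity valid is the stronger structure you actually establish, namely that $g(t)=(F(\gamma(t)),w)_H$ is continuous on $[0,1]$ and $C^1$ off the single point $t_0$ with $g'$ integrable, so the clean way to finish is to apply the classical fundamental theorem of calculus on $[0,t_0-\varepsilon]$ and $[t_0+\varepsilon,1]$ and let $\varepsilon\to0$, using continuity of $g$ and dominated convergence. Second, your Jensen argument minimizes $\int_0^1|t-t^\ast|^{p-2}\,dt$ only over $t^\ast\in[0,1]$, whereas $t^\ast=-(y,w)_H/\|w\|_H^2$ may well lie outside $[0,1]$ (take $y=Re$, $x=(R+1)e$, which gives $t^\ast=-R$); since $p-2\geq0$ and $|t-t^\ast|\geq|t-\pi(t^\ast)|$ for $t\in[0,1]$, where $\pi$ denotes the projection onto $[0,1]$, the infimum over all of $\mathbb{R}$ is still $2^{2-p}/(p-1)$, but this one-line remark is needed. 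Finally, the case $x=y$, in which $t^\ast$ is undefined, should be noted as trivial, both sides of $(\ref{2.11})$ being zero.
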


Inequality $(\ref{2.11})$, which was verified for $\mathbb{R}^N$ in \cite{Perai,Simon1978} and then for a general inner product space in \cite{MR4064014}, will play a crucial role in our estimate.

\begin{lemma}\cite{Simon1986}\label{lemma 1-11-1}
Assume $X\hookrightarrow\hookrightarrow B \hookrightarrow Y$ where $X,~B,~Y$ are Banach spaces. The following statements hold.
\begin{enumerate}[(i)]
\item  Let $F$ be bounded in $L^{p}(0,T;X)$ where $1\leq p<\infty$, and $\partial F/\partial t=\{\partial f/\partial t:f\in F\}$ be bounded in $L^{1}(0,T;Y)$, where $\partial /\partial t$ is the weak time derivative. Then $F$ is relatively compact in $L^{p}(0,T;B)$.
  \item Let $F$ be bounded in $L^{\infty}(0,T;X)$ and $\partial F/\partial t$ be bounded in $L^{r}(0,T;Y)$ where $r>1$. Then $F$ is relatively compact in $C(0,T;B)$.
\end{enumerate}
\end{lemma}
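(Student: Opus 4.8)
The statement is the Aubin--Lions--Simon compactness theorem, and I would prove it using two ingredients: Ehrling's interpolation inequality and a Kolmogorov--Riesz--Fr\'echet type compactness criterion for vector-valued function spaces. The compactness of $X\hookrightarrow\hookrightarrow B$ together with the continuity of $B\hookrightarrow Y$ yields, by a standard contradiction argument, Ehrling's inequality: for every $\eta>0$ there is a constant $C_\eta>0$ such that $\|v\|_B\leq \eta\|v\|_X+C_\eta\|v\|_Y$ for all $v\in X$. This is the device that converts control in the weak norm $\|\cdot\|_Y$ (coming from the time-derivative bound) into control in $\|\cdot\|_B$, at the cost of a term small in $\|\cdot\|_X$ (which the hypotheses keep bounded).

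For part (ii) I would argue by the Arzel\`a--Ascoli theorem in $C([0,T];B)$. Since $\partial_t f\in L^{r}(0,T;Y)$ with $r>1$, H\"older's inequality gives, for $f\in F$ and $0\le s\le t\le T$, the uniform bound $\|f(t)-f(s)\|_Y\leq\int_s^t\|\partial_\sigma f\|_Y\,d\sigma\leq |t-s|^{1-1/r}\,\|\partial_t f\|_{L^{r}(0,T;Y)}$, so $F$ is uniformly equicontinuous as a family of $Y$-valued functions (this is exactly where $r>1$ is needed). Inserting this into Ehrling's inequality yields $\|f(t)-f(s)\|_B\leq 2\eta\sup_\sigma\|f(\sigma)\|_X+C_\eta|t-s|^{1-1/r}\sup_{f\in F}\|\partial_t f\|_{L^{r}(0,T;Y)}$; choosing $\eta$ small and then $|t-s|$ small gives equicontinuity into $B$. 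Since for each $t$ the set $\{f(t):f\in F\}$ is bounded in $X$, hence relatively compact in $B$, the vector-valued Arzel\`a--Ascoli theorem gives relative compactness of $F$ in $C([0,T];B)$.

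For part (i) I would use the vector-valued Kolmogorov--Riesz--Fr\'echet criterion: a family bounded in $L^{p}(0,T;B)$ is relatively compact there provided the mean values $\int_{t_1}^{t_2}f\,dt$ form a relatively compact subset of $B$ for each $0<t_1<t_2<T$, and the time-translations satisfy $\sup_{f\in F}\|f(\cdot+h)-f(\cdot)\|_{L^{p}(0,T-h;B)}\to0$ as $h\to0$. The mean-value condition is immediate: by H\"older the means are bounded in $X$, hence relatively compact in $B$ by the compact embedding. For the translation condition, set $g_f(\sigma)=\|\partial_\sigma f(\sigma)\|_Y$ and $a_h(t)=\int_t^{t+h}g_f$, so that $\|f(t+h)-f(t)\|_Y\leq a_h(t)$ while $\|f(t+h)-f(t)\|_X$ stays bounded in $L^{p}$ uniformly over $F$. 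Ehrling's inequality gives $\|f(\cdot+h)-f(\cdot)\|_{L^{p}(0,T-h;B)}\leq 2\eta\,\|f\|_{L^{p}(0,T;X)}+C_\eta\|a_h\|_{L^{p}(0,T-h)}$, and the elementary facts $a_h(t)\leq\|g_f\|_{L^1}$ and $\int_0^{T-h}a_h\,dt\leq h\|g_f\|_{L^1}$ combine to give $\|a_h\|_{L^{p}}^{p}\leq \|g_f\|_{L^1}^{\,p-1}\int_0^{T-h}a_h\,dt\leq h\,\|g_f\|_{L^1}^{\,p}$. Hence the second term is $O(h^{1/p})$ uniformly over $F$; fixing $\eta$ small and then letting $h\to0$ yields the required uniform smallness.

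The step I expect to be the main obstacle is precisely this translation estimate in part (i): the derivative hypothesis only controls $f(\cdot+h)-f(\cdot)$ in $L^{1}(0,T;Y)$, whereas the criterion demands smallness in $L^{p}(0,T;B)$ with $p$ possibly large, and naive interpolation closes only for exponents strictly below $p$. The resolution rests on the observation that $\|a_h\|_{L^{p}}^{p}\le h\|g_f\|_{L^1}^{\,p}$ holds although $g_f$ need only lie in $L^{1}$, which bridges the gap cleanly and uniformly. The other genuinely nontrivial point is the justification of the vector-valued Kolmogorov--Riesz--Fr\'echet criterion itself; one can avoid invoking it by instead working with time-mollifications $f^{\delta}$, proving $f^{\delta}\to f$ in $L^{p}(0,T;B)$ uniformly over $F$ (again via the translation estimate) while each fixed-$\delta$ family $\{f^{\delta}\}$ is relatively compact by the argument of part (ii), and then concluding total boundedness of $F$.
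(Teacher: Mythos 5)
Your proposal is correct, but there is nothing in the paper to compare it against: the paper does not prove this lemma at all, it simply quotes it from Simon's compactness paper (\cite{Simon1986}), and your argument is in substance a faithful reconstruction of the proof in that cited source. Simon's own proof of this theorem uses exactly your ingredients: the Ehrling-type interpolation inequality $\|v\|_B\leq\eta\|v\|_X+C_\eta\|v\|_Y$ obtained from the compact embedding by contradiction, his $L^p(0,T;B)$ compactness criterion in terms of mean values and time translations (your ``vector-valued Kolmogorov--Riesz--Fr\'echet'' step, which can indeed be replaced by the mollification argument you sketch), and the translation estimate in which only an $L^1$ bound on $\partial_t f$ is available. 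Your key bridging inequality
\begin{equation*}
\|a_h\|_{L^p(0,T-h)}^p\leq\Bigl(\sup_t a_h(t)\Bigr)^{p-1}\int_0^{T-h}a_h(t)\,dt\leq \|g_f\|_{L^1}^{\,p-1}\cdot h\,\|g_f\|_{L^1}=h\,\|g_f\|_{L^1}^{\,p}
\end{equation*}
is exactly what closes the gap between the $L^1$-in-time derivative bound and the $L^p$-in-time smallness required, and you have identified correctly that this is the crux of part (i). One small gloss in part (ii): boundedness in $L^\infty(0,T;X)$ only controls $f(t)$ in $X$ for almost every $t$; to apply Arzel\`a--Ascoli pointwise you should note that, by the equicontinuity into $Y$ already established, every value $f(t)$ is a $Y$-limit of values $f(s)$ with $s$ outside the null set, hence lies in the $B$-closure of an $X$-bounded set, which is compact in $B$ since $X\hookrightarrow\hookrightarrow B$. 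With that standard remark inserted, the proof is complete.
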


\section{Global well-posedness }
In this section we discuss the global well-posedness of problem $(\ref{wave equa})$-$(\ref{initial condition})$.
We will use the following definitions of solutions.
\begin{definition}
A function $u(t)\in C([0,T];H^1_0(\Omega)) \cap C^1([0,T];L^2(\Omega))$ with $u(0)=u_0$ and $u_t(0)=u_1$ is said to be\hfil\break
$(i)$ strong solution to problem $(\ref{wave equa})$-$(\ref{initial condition})$ on the interval $[0,T]$, iff\\
$\bullet$ $u\in W^{1,1}(a,b;H^1_0(\Omega))$ and $u_t\in W^{1,1}(a,b;L^2(\Omega))$ for any $0<a<b<T$;\\
$\bullet$ $-\Delta u(t)+k||u_{t}(t)||_{L^2(\Omega)}^p u_t(t)\in L^2(\Omega)$ for almost all $t\in[0,T]$;\hfil\break
$\bullet$ equation $(\ref{wave equa})$ is satisfied in $L^2(\Omega)$ for almost all $t\in[0,T]$;\hfil\break
$(ii)$ generalized solution to problem $(\ref{wave equa})$-$(\ref{initial condition})$ on the interval $[0,T]$, iff there exists a sequence of strong solutions $\{u^{j}(t)\}$ to problem $(\ref{wave equa})$-$(\ref{initial condition})$ with initial data $(u_0^{j},u_1^{j})$ instead of $(u_{0},u_{1})$ such that $(u^{j},u_t^{j})\rightarrow (u,u_t)$  in $ C([0,T];H^1_0(\Omega)\times L^2(\Omega))$ as $j\rightarrow +\infty$;\hfil\break
$(iii)$ weak solution  to problem $(\ref{wave equa})$-$(\ref{initial condition})$ on the interval $[0,T]$, iff
\begin{equation}
\begin{split}\label{6-12-1}
\int_{\Omega}u_{t}(t,x)\psi(x)dx =&\int_{\Omega}u_{1}\psi dx + \int^{t}_{0}\left[\int_{\Omega\times\Omega}K(x,y)u_{t}(\tau,y)\psi(x)dxdy\right.\\
&\  +\int_{\Omega}h(x)\psi(x)dx -\int_{\Omega}\nabla u(\tau,x) \nabla \psi(x) dx  \\
&\ \left.-k||u_{t}(\tau)||^p \int_{\Omega}u_t(\tau,x)\psi(x) dx- \int_{\Omega}f(u(\tau,x))\psi(x)dx\right]d\tau
\end{split}
\end{equation}
holds for every $\psi \in H^1_0(\Omega) $ and for almost all $t \in [0,T]$.
\end{definition}
In order to prove the global well-posedness for  $(\ref{wave equa})$-$(\ref{initial condition})$,  we need the following definitions and lemmas.
\begin{definition}\cite{R. Showalter}
Let $X$ be a real Banach space with dual space $X^*$, $F:X\rightarrow X^*$ is said to be monotone if
$\left<F(u)-F(v),u-v\right>\geq 0,\ \forall u,v\in X$, and hemicontinuous if for each $u,v\in X$ the real-valued function $\lambda\rightarrow \left<F(u+\lambda v),v\right>$ is continuous.
\end{definition}
\begin{definition}\cite{R. Showalter}
Let $X$ and $Y$ be Banach spaces. $F:X\rightarrow Y$ is called demicontinuous if it is continuous from
$X$ with norm convergence to $Y$ with weak convergence.
\end{definition}
\begin{lemma}\cite{K. Deimling}\label{lem22-2-18-3}
Let $X$ be a real reflexive Banach space, $F:X\rightarrow X^*$ hemicontinuous, monotone and coercive, i.e. $\frac{\left<Fx,x\right>}{||x||}\rightarrow\infty$ as $||x||\rightarrow\infty$. Then $F$ is onto $X^*$.
 \end{lemma}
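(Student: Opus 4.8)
The plan is to establish this classical Minty--Browder surjectivity theorem by a finite-dimensional (Galerkin) approximation combined with the monotonicity trick. First I would reduce surjectivity of $F$ to a zero-finding problem: given an arbitrary $b\in X^*$, set $Gu:=Fu-b$. Then $G$ is again monotone, hemicontinuous, and coercive (coercivity survives because $\langle Gu,u\rangle/\|u\|\geq \langle Fu,u\rangle/\|u\|-\|b\|_{X^*}\to\infty$), so it suffices to produce $u\in X$ with $Gu=0$; renaming, I may assume $b=0$ and seek $u$ with $Fu=0$. I would also record the standard fact that a monotone, everywhere-defined, hemicontinuous operator is demicontinuous, which is needed both in the finite-dimensional step and in the limit passage.

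Next I set up the finite-dimensional existence. Assuming first that $X$ is separable (the general reflexive case is recovered by running the same argument over the directed family of all finite-dimensional subspaces and invoking weak compactness of bounded sets), fix an increasing sequence of finite-dimensional subspaces $X_n$ with $\overline{\bigcup_n X_n}=X$, with inclusions $i_n:X_n\hookrightarrow X$ and adjoints $i_n^*:X^*\to X_n^*$, and put $F_n:=i_n^*\,F\,i_n:X_n\to X_n^*$. Since $F$ is demicontinuous, $F_n$ is continuous on the finite-dimensional space $X_n$. Coercivity yields a radius $R$ (independent of $n$) with $\langle Fx,x\rangle>0$ whenever $\|x\|\geq R$; hence on the sphere $\{x\in X_n:\|x\|=R\}$ one has $\langle F_n x,x\rangle=\langle Fx,x\rangle>0$. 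By the acute-angle corollary of Brouwer's fixed point theorem, $F_n$ has a zero $u_n\in X_n$, i.e. $\langle Fu_n,v\rangle=0$ for all $v\in X_n$; taking $v=u_n$ and using coercivity once more forces $\|u_n\|\leq R$ uniformly in $n$.

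Finally I would pass to the limit. By reflexivity the bounded sequence $\{u_n\}$ admits a weakly convergent subsequence $u_n\rightharpoonup u$. The key step is Minty's monotonicity trick: for fixed $w\in X_m$ and $n\geq m$, monotonicity gives $\langle Fw,w-u_n\rangle\geq\langle Fu_n,w-u_n\rangle=\langle Fu_n,w\rangle-\langle Fu_n,u_n\rangle=0$, the last two pairings vanishing since $w,u_n\in X_n$. Letting $n\to\infty$ yields $\langle Fw,w-u\rangle\geq 0$ for every $w\in\bigcup_m X_m$, and by continuity of $w\mapsto\langle Fw,w-u\rangle$ (a consequence of demicontinuity of $F$ and the convergence of the pairing of a weakly convergent sequence in $X^*$ against a strongly convergent one in $X$) this extends to all $w\in X$. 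To convert this variational inequality into the equation I take $w=u+tz$ with $t>0$ and $z\in X$ arbitrary, divide by $t$, and let $t\to 0^+$; hemicontinuity gives $\langle Fu,z\rangle\geq 0$ for all $z$, and replacing $z$ by $-z$ forces $Fu=0$. The delicate point, where all the hypotheses conspire, is precisely this limit passage: monotonicity (via the Minty trick) is what lets me dispense with any compactness or strong convergence of $\{Fu_n\}$, reflexivity supplies the weak limit $u$, and hemicontinuity is exactly what upgrades $\langle Fu,z\rangle\geq 0$ back to $Fu=0$.
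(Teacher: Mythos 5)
Your proposal is correct: it is the classical Browder--Minty surjectivity argument (reduction to $Fu=0$, Galerkin approximation on finite-dimensional subspaces, the acute-angle corollary of Brouwer's theorem, a uniform bound from coercivity, weak compactness via reflexivity, and Minty's monotonicity trick combined with hemicontinuity to pass to the limit), and each step, including the extension of the variational inequality $\left<Fw,w-u\right>\geq 0$ from $\bigcup_m X_m$ to all of $X$ by demicontinuity, is sound. The paper offers no proof of its own --- it cites the lemma from Deimling's \emph{Nonlinear Functional Analysis} --- and your argument is essentially the textbook proof found there, so the two approaches coincide.
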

 \begin{lemma}\cite{R. Showalter}\label{lem22-2-18-30}
Let $X$ be a reflexive Banach space. If $F:X\rightarrow X^*$ hemicontinuous, monotone and bounded, then it is demicontinuous.
\end{lemma}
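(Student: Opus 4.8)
The plan is to unwind the definition of demicontinuity: I must show that whenever $x_n\to x$ in the norm of $X$, the images satisfy $F(x_n)\rightharpoonup F(x)$ weakly in $X^*$. First I would note that a norm-convergent sequence $\{x_n\}$ is bounded, so by the boundedness of $F$ the sequence $\{F(x_n)\}$ is bounded in $X^*$. Since $X$ is reflexive, so is $X^*$, and hence every subsequence of $\{F(x_n)\}$ admits a further subsequence converging weakly to some $\xi\in X^*$. The whole difficulty is then reduced to showing that any such weak limit $\xi$ must equal $F(x)$; once this is established, a standard subsequence (Urysohn) argument upgrades the convergence to the full sequence.

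To identify the limit I would use Minty's device. Fix a weakly convergent subsequence, still denoted $\{F(x_n)\}$, with $F(x_n)\rightharpoonup\xi$. By monotonicity, for every $y\in X$,
\[
\langle F(x_n)-F(y),\,x_n-y\rangle\ge 0.
\]
The crucial observation is that $\langle F(x_n),x_n\rangle\to\langle\xi,x\rangle$: indeed, splitting $\langle F(x_n),x_n\rangle-\langle\xi,x\rangle=\langle F(x_n),x_n-x\rangle+\langle F(x_n)-\xi,x\rangle$, the first term is bounded by $\|F(x_n)\|_{X^*}\|x_n-x\|_X\to 0$ (pairing a bounded sequence against a strongly null one), while the second vanishes by weak convergence. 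Combining this with $\langle F(x_n),y\rangle\to\langle\xi,y\rangle$ and $\langle F(y),x_n\rangle\to\langle F(y),x\rangle$, passing to the limit in the monotonicity inequality yields
\[
\langle\xi-F(y),\,x-y\rangle\ge 0\qquad\text{for all }y\in X.
\]

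Now I would exploit hemicontinuity. Choosing $y=x-\lambda v$ with $\lambda>0$ and arbitrary $v\in X$, the displayed inequality becomes $\lambda\langle\xi-F(x-\lambda v),v\rangle\ge 0$; dividing by $\lambda$ and letting $\lambda\to 0^+$, hemicontinuity of $F$ gives $\langle\xi-F(x),v\rangle\ge 0$. Since $v$ is arbitrary, replacing $v$ by $-v$ forces $\langle\xi-F(x),v\rangle=0$ for all $v\in X$, whence $\xi=F(x)$. Finally, because every subsequence of the bounded sequence $\{F(x_n)\}$ has a further subsequence converging weakly to the same limit $F(x)$, the entire sequence converges weakly to $F(x)$, proving demicontinuity. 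I expect the combination of monotonicity and hemicontinuity in this Minty-type limit passage to be the main obstacle; in particular the mixed weak-strong product $\langle F(x_n),x_n\rangle$ must be handled with care, as it is precisely there that the strong convergence $x_n\to x$ (rather than mere weak convergence) is indispensable.
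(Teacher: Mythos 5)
The paper does not prove this lemma at all—it is quoted directly from Showalter's monograph \cite{R. Showalter}—so there is no internal proof to compare against; judged on its own, your argument is correct and complete, and it is precisely the standard Minty-trick proof one finds in that reference: boundedness plus reflexivity of $X^*$ to extract weak cluster points, strong convergence of $x_n$ to control the mixed pairing $\langle F(x_n),x_n\rangle$, monotonicity to pass to the limit, hemicontinuity along the segment $y=x-\lambda v$ to identify the limit as $F(x)$, and the subsequence principle to upgrade to the full sequence. No gaps.
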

\begin{lemma}\cite{Chueshov1} \label{lem22-2-18-1}
Let $A:D(A)\subseteq H\rightarrow H$ be a maximal accretive operator on a Hilbert space $H$, i.e., $\left(Ax_1-Ax_2,x_1-x_2\right)_H\geq 0$ for any $x_1,x_2\in D(A)$ and $\mathrm{Rg}(I+A)=H$; besides, assume that $0\in A0$. Let $B:H\rightarrow H$ be a locally Lipschitz. If $u_0\in D(A)$, $f\in W^{1,1}(0,t;H)$ for all $t>0$, then there exists $t_{\mathrm{max}}\leq +\infty$ such that the initial value problem
\begin{equation}\label{6-13-33}
  u_t+Au+Bu\ni f\ \  \text{and} \ \ u=u_0\in H
\end{equation}
has a unique strong solution $u$ on the interval $[0,t_{\mathrm{max}})$.

Whereas, if $u_0\in \overline{D(A)}$, $f\in L^{1}(0,t;H)$ for all $t>0$, then problem  $(\ref{6-13-33})$
has a unique generalized solution $u\in C([0,t_{\mathrm{max}});H)$.

 Moreover, in both cases we have $\lim\limits_{t\rightarrow t_{\mathrm{max}}}\|u(t)\|_H=\infty $ provided $t_{\mathrm{max}}<\infty$.
\end{lemma}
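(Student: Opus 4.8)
The plan is to treat the Cauchy problem \eqref{6-13-33} as a locally Lipschitz perturbation of the maximal monotone (equivalently, on the Hilbert space $H$, maximal accretive) Cauchy problem governed by $A$, and to combine the classical nonlinear semigroup theory for $A$ with a contraction mapping argument for $B$. The starting point is the well-posedness of the \emph{unperturbed} inhomogeneous problem $v_t+Av\ni g$, $v(0)=v_0$: for $v_0\in D(A)$ and $g\in W^{1,1}(0,t;H)$ it admits a unique strong solution, for $v_0\in\overline{D(A)}$ and $g\in L^1(0,t;H)$ a unique generalized solution, and, crucially, the contraction estimate
\begin{equation}\label{contr-est}
\|v(t)-\tilde v(t)\|_H\leq\|v_0-\tilde v_0\|_H+\int_0^t\|g(s)-\tilde g(s)\|_H\,ds
\end{equation}
holds for two data pairs $(v_0,g)$ and $(\tilde v_0,\tilde g)$. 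Inequality \eqref{contr-est} is the sharp instrument that will control the perturbation $B$.

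First I would establish local existence by a fixed point. Fix $R>\|u_0\|_H$ and let $L=L(R)$ be the Lipschitz constant of $B$ on the ball $\{\|w\|_H\leq R\}$. On the closed set $\{u\in C([0,T];H):u(0)=u_0,\ \|u(t)\|_H\leq R\}$ define $\Phi$ by letting $\Phi(u)=v$ solve $v_t+Av\ni f-Bu$, $v(0)=u_0$; the source $f-Bu$ lies in $L^1$, and in $W^{1,1}$ when $f$ does and $u$ is Lipschitz in time, so $\Phi$ is well defined. Applying \eqref{contr-est} to two inputs $u,\tilde u$ together with $\|Bu-B\tilde u\|_H\leq L\|u-\tilde u\|_H$ yields
\begin{equation}\label{Phi-contr}
\sup_{[0,T]}\|\Phi(u)-\Phi(\tilde u)\|_H\leq LT\sup_{[0,T]}\|u-\tilde u\|_H,
\end{equation}
while a companion bound shows $\Phi$ maps the ball into itself once $T$ is small. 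Hence for $LT<1$ the map $\Phi$ is a contraction, and its unique fixed point is the local solution. When $u_0\in D(A)$ and $f\in W^{1,1}$, the regularity theory for the maximal monotone Cauchy problem guarantees this fixed point is a strong solution, the one delicate point being that $Bu$ must inherit enough time regularity: this follows from a short bootstrap, since the strong solution is Lipschitz in $t$ and $B$ is locally Lipschitz, whence $Bu\in W^{1,1}$.

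Next I would run the continuation argument. Let $t_{\mathrm{max}}$ be the supremum of times up to which a solution exists; uniqueness via \eqref{contr-est} makes this well defined. If $t_{\mathrm{max}}<\infty$ while $\limsup_{t\to t_{\mathrm{max}}}\|u(t)\|_H<\infty$, then the trajectory remains in a fixed ball on which $B$ has a uniform Lipschitz constant, so the local existence time produced above is bounded below independently of the base point; restarting from a time close to $t_{\mathrm{max}}$ would then extend the solution beyond $t_{\mathrm{max}}$, a contradiction. Therefore $\|u(t)\|_H\to\infty$ as $t\to t_{\mathrm{max}}$ whenever $t_{\mathrm{max}}<\infty$.

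Finally, for the generalized case with $u_0\in\overline{D(A)}$ and $f\in L^1$, I would approximate $u_0$ by $u_0^n\in D(A)$ and $f$ by $f^n\in W^{1,1}$ with $u_0^n\to u_0$ in $H$ and $f^n\to f$ in $L^1(0,t;H)$. The strong solutions $u^n$ exist by the above, and applying \eqref{contr-est} to $u^n,u^m$ with the perturbation term absorbed by Gronwall's inequality shows $\{u^n\}$ is Cauchy in $C([0,T];H)$; its limit is the unique generalized solution, and the same estimates deliver continuous dependence and the blow-up alternative. The main obstacle I anticipate is the strong-solution regularity step: marrying the regularity theory for the maximal monotone part with the locally Lipschitz perturbation requires the time-regularity bootstrap for $Bu$ and a careful choice of the invariant ball so that the Lipschitz constant controlling \eqref{Phi-contr} stays valid throughout $[0,T]$.
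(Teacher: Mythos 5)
The paper does not prove this lemma at all: it is imported verbatim by citation from \cite{Chueshov1} (Chueshov--Eller--Lasiecka) and used as a black box in Step 1 of Theorem \ref{thm22-2-18-1}. So there is no in-paper proof to compare against; your proposal must be judged against the standard nonlinear-semigroup argument, and in fact it \emph{is} that standard argument: well-posedness and the integral contraction estimate for the unperturbed m-accretive problem, a fixed point in $C([0,T];H)$ for the Lipschitz perturbation, continuation to a maximal interval, and approximation of data for generalized solutions. The overall structure is correct, and the places where the hypotheses enter (m-accretivity for the contraction estimate, $0\in A0$ for the self-mapping bound of $\Phi$, reflexivity of $H$ so that Lipschitz-in-time functions lie in $W^{1,1}$) are all implicitly in the right spots.

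Two details need tightening. First, your strong-regularity bootstrap is circular as written: you say ``the strong solution is Lipschitz in $t$ \dots whence $Bu\in W^{1,1}$,'' but at that stage the fixed point is only a mild (generalized) solution with a continuous source $f-Bu$, so you may not yet invoke any property of a strong solution. The correct order is: use the time-translation trick on the fixed point $u$ itself --- compare $u(\cdot+h)$ with $u(\cdot)$ via the contraction estimate, using $u_0\in D(A)$ (comparison with the constant function $u_0$, whose source is any section of $Au_0$) and $f\in W^{1,1}$, then Gronwall --- to conclude $u$ is Lipschitz on $[0,T]$; only then is $Bu$ Lipschitz, hence in $W^{1,1}(0,T;H)$ by reflexivity of $H$, and only then does the unperturbed regularity theory upgrade $u$ to a strong solution. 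Second, your continuation argument assumes $\limsup_{t\to t_{\mathrm{max}}}\|u(t)\|_H<\infty$ and derives a contradiction, which proves only $\limsup=\infty$; the lemma asserts $\lim=\infty$. Run the same restart argument under the weaker assumption $\liminf_{t\to t_{\mathrm{max}}}\|u(t)\|_H<\infty$: a sequence $t_n\to t_{\mathrm{max}}$ with $\|u(t_n)\|_H\le M$ gives a uniform lower bound $\delta(M)>0$ on the restart time (also using that $u(t_n)\in D(A)$ for suitable $t_n$, and that the $L^1$-modulus of $f$ is uniform up to $t_{\mathrm{max}}$), contradicting maximality once $t_n>t_{\mathrm{max}}-\delta(M)$. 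Both fixes are routine, so I would call this a correct proposal with two repairable imprecisions rather than a genuine gap.
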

We are now ready to establish the global well-posedness for $(\ref{wave equa})$-$(\ref{initial condition})$.
\begin{theorem}\label{thm22-2-18-1}
Let $T>0$ be arbitrary. Under Assumption $\ref{21-8-29-8}$, we have the following statements.
\begin{enumerate}[(i)]
\item  For any $(u_0,u_1) \in H^1_0(\Omega)\times H^1_0(\Omega)$ such that $-\Delta u_0+k||u_1||^p u_1\in L^2(\Omega)$, there exists a unique strong solution
  $u(t)$ to problem $(\ref{wave equa})$-$(\ref{initial condition})$ on $[0,T]$.
\item For every $(u_0,u_1) \in H^1_0(\Omega)\times L^2(\Omega)$ there exists a unique generalized solution, which is also the weak solution to problem $(\ref{wave equa})$-$(\ref{initial condition})$.
\end{enumerate}
\end{theorem}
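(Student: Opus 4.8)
The plan is to recast \eqref{wave equa}--\eqref{initial condition} as a first-order abstract Cauchy problem of the form treated in Lemma \ref{lem22-2-18-1} and to verify its hypotheses. Writing $U=(u,u_t)$ and working in the energy space $\mathcal{H}=H^1_0(\Omega)\times L^2(\Omega)$ with inner product $\big((u,v),(\tilde u,\tilde v)\big)_{\mathcal H}=(\nabla u,\nabla\tilde u)+(v,\tilde v)$, I would set $\mathbb{A}(u,v)=\big(-v,\ \mathcal{A}u+k\|v\|^p v\big)$ on the domain $D(\mathbb A)=\{(u,v):v\in H^1_0(\Omega),\ \mathcal{A}u+k\|v\|^p v\in L^2(\Omega)\}$, together with $\mathbb{B}(u,v)=\big(0,\ f(u)-\Psi(v)\big)$ and $\mathbb{F}=(0,h)$, so that the system reads $U_t+\mathbb{A}U+\mathbb{B}U=\mathbb{F}$. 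It then suffices to show that $\mathbb{A}$ is maximal accretive with $0\in\mathbb{A}0$ and that $\mathbb{B}:\mathcal{H}\to\mathcal{H}$ is locally Lipschitz; since $\mathbb{F}$ is time-independent it lies in $W^{1,1}(0,t;\mathcal{H})$ trivially.

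For accretivity, the cross terms generated by $-v$ and $\mathcal{A}u$ cancel after integration by parts, leaving $\big(\mathbb{A}U_1-\mathbb{A}U_2,U_1-U_2\big)_{\mathcal H}=k\big(\|v_1\|^p v_1-\|v_2\|^p v_2,\ v_1-v_2\big)$; writing $\|v\|^p v=\|v\|^{(p+2)-2}v$ and applying the strong monotone inequality of Lemma \ref{lemma l2.3} with exponent $p+2\geq 2$ shows this is $\geq C_{p+2}\,k\,\|v_1-v_2\|^{p+2}\geq 0$, and clearly $\mathbb{A}0=0$. The main work is the range condition $\mathrm{Rg}(I+\mathbb{A})=\mathcal{H}$: given $(g_1,g_2)\in\mathcal{H}$, the equation $(I+\mathbb{A})(u,v)=(g_1,g_2)$ reduces, via $u=g_1+v$, to solving $Fv:=(I+\mathcal{A})v+k\|v\|^p v=g_2-\mathcal{A}g_1$ in $H^{-1}(\Omega)$ for $v\in H^1_0(\Omega)$. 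I would check that $F:H^1_0(\Omega)\to H^{-1}(\Omega)$ is hemicontinuous, monotone (again by Lemma \ref{lemma l2.3}) and coercive---indeed $\langle Fv,v\rangle\geq\|v\|_{H^1_0}^2$---and then invoke Lemma \ref{lem22-2-18-3} to get surjectivity; the resulting $(u,v)$ automatically lies in $D(\mathbb{A})$ since $\mathcal{A}u+k\|v\|^p v=g_2-v\in L^2(\Omega)$. This range step, with the arbitrary exponent $p>0$ handled through the monotone inequality, is the principal obstacle.

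For $\mathbb{B}$, the anti-damping $\Psi$ is a Hilbert--Schmidt (hence bounded linear, globally Lipschitz) operator on $L^2(\Omega)$ because $K\in L^2(\Omega\times\Omega)$. For the critical source, the growth bound \eqref{growth} yields $|f(s_1)-f(s_2)|\leq M(|s_1|^{2/(N-2)}+|s_2|^{2/(N-2)}+1)|s_1-s_2|$, and Hölder's inequality together with $H^1_0(\Omega)\hookrightarrow L^{2N/(N-2)}(\Omega)$ gives $\|f(u_1)-f(u_2)\|\leq C\big(1+\|u_1\|_{H^1_0}^{2/(N-2)}+\|u_2\|_{H^1_0}^{2/(N-2)}\big)\|u_1-u_2\|_{H^1_0}$, so $f:H^1_0(\Omega)\to L^2(\Omega)$ is locally Lipschitz even at the critical exponent; hence $\mathbb{B}:\mathcal{H}\to\mathcal{H}$ is locally Lipschitz. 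Lemma \ref{lem22-2-18-1} then provides, on a maximal interval $[0,t_{\max})$, a unique strong solution when $U_0=(u_0,u_1)\in D(\mathbb{A})$---which is exactly the class in (i)---and a unique generalized solution when $U_0\in\overline{D(\mathbb{A})}$; a density argument (approximating $u_0$ in $H^2(\Omega)\cap H^1_0(\Omega)$ and $u_1$ in $H^1_0(\Omega)$) shows $\overline{D(\mathbb{A})}=\mathcal{H}$, covering case (ii).

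To upgrade local to global existence on the arbitrary interval $[0,T]$, I would rule out the blow-up alternative $\|U(t)\|_{\mathcal H}\to\infty$ of Lemma \ref{lem22-2-18-1} by an a priori energy estimate: testing \eqref{wave equa} with $u_t$ gives $\frac{d}{dt}E(t)+k\|u_t\|^{p+2}=(\Psi(u_t),u_t)$ with $E(t)=\frac12\|u_t\|^2+\frac12\|\nabla u\|^2+\int_\Omega F(u)\,dx-(h,u)$ and $F(s)=\int_0^s f$. The dissipativity condition \eqref{dissipativity condition} together with the Poincaré inequality $\|\nabla u\|^2\geq\lambda_1\|u\|^2$ makes $E$ bounded below by a coercive quantity minus a constant, while $(\Psi(u_t),u_t)\leq\|K\|_{L^2(\Omega\times\Omega)}\|u_t\|^2\leq C(E(t)+1)$; Gronwall's inequality then bounds $E(t)$, hence $\|U(t)\|_{\mathcal H}$, on $[0,T]$, so $t_{\max}>T$. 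Finally, every strong solution satisfies the weak formulation \eqref{6-12-1}, and passing to the limit in \eqref{6-12-1} along the approximating sequence---using $(u^j,u^j_t)\to(u,u_t)$ in $C([0,T];\mathcal{H})$, the continuity of $v\mapsto\|v\|^p v$, and the local Lipschitz continuity of $f$---shows the generalized solution is also a weak solution, completing (ii).
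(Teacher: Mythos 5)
Your proposal is correct and follows essentially the same route as the paper: the same first-order reformulation handled via the maximal accretive plus locally Lipschitz perturbation framework of Lemma \ref{lem22-2-18-1}, the same monotone-operator argument (Lemmas \ref{lemma l2.3} and \ref{lem22-2-18-3}) for accretivity and the range condition, the same energy estimate to rule out finite-time blow-up, and the same limit passage showing generalized solutions are weak. The only cosmetic differences are that you treat $h$ as a separate $W^{1,1}$ forcing rather than absorbing it into $B$, close the a priori bound by Gronwall instead of absorbing $\|u_t\|^2$ into $k\|u_t\|^{p+2}$ via Young's inequality, and use direct norm-continuity of $v\mapsto\|v\|^p v$ where the paper invokes demicontinuity (Lemma \ref{lem22-2-18-30}).
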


\begin{proof}
We divide our proof into three steps.

\vspace*{4pt}\noindent\textbf{Step $1$.} We first prove local well-posedness of problem $(\ref{wave equa})$-$(\ref{initial condition})$.

Let $U=(u,v)^T$ with $v=u_t$. We rewrite  $(\ref{wave equa})$-$(\ref{initial condition})$ as
\begin{equation}
\left\{
\begin{aligned}
 &U_{t}+A(U)=B(U),\ \ t>0,\\
 &U(0)=U_0,
\end{aligned}
  \right.
\end{equation}
where $U_0=(u_0,u_1)^T$, $A:D(A)\subseteq H^{1}_0(\Omega)\times L^{2}(\Omega)\rightarrow H^{1}_0(\Omega)\times L^{2}(\Omega)$ is given by
\begin{equation*}
\begin{split}
  &A(U)=\begin{pmatrix}-v\\-\Delta u+k||v||^p v\end{pmatrix},\\
  &U\in D(A)=\left\{(u,v)^T\in H^{1}_0(\Omega)\times H^{1}_0(\Omega)|-\Delta u+k||v||^p v\in L^2(\Omega)\right\}
\end{split}\end{equation*}
and $B:H^{1}_0(\Omega)\times L^{2}(\Omega)\rightarrow H^{1}_0(\Omega)\times L^{2}(\Omega)$ is given by
\begin{equation*}
  B(U)=\begin{pmatrix}0\\\Psi(v)+h(x)-f(u)\end{pmatrix},U=(u,v)^T\in H^{1}_0(\Omega)\times L^{2}(\Omega).
\end{equation*}
We note that
\begin{equation}\label{6-11-50}
\overline{D(A)}=H^{1}_0(\Omega)\times L^{2}(\Omega),
\end{equation}
because \[\left(H^{1}_0(\Omega)\bigcap H^{2}(\Omega)\right)\times \left(H^{1}_0(\Omega)\bigcap H^{2}(\Omega)\right)\subseteq D(A)\subseteq H^{1}_0(\Omega)\times L^{2}(\Omega)\] and $\left(H^{1}_0(\Omega)\bigcap H^{2}(\Omega)\right)\times \left(H^{1}_0(\Omega)\bigcap H^{2}(\Omega)\right)$ is dense in $H^{1}_0(\Omega)\times L^{2}(\Omega)$.

By Lemma~$\ref{lemma l2.3}$, for every $v_1, v_2$ in $L^2(\Omega)$ we have
\begin{equation}
\begin{aligned}\label{6-11-6}
\left(\|v_{1}\|^{p}v_{1}-\|v_{2}\|^{p}v_{2},v_{1}-v_{2}\right)
\geq0.
\end{aligned}
\end{equation}
Consequently, we have
\begin{equation}\label{6-10-30}
\begin{split}
  &\big(A(U_1)-A(U_2),U_1-U_2\big)_{H_{0}^{1}(\Omega)\times L^{2}(\Omega)}\\=&
  \big(\nabla(v_{2}-v_{1}),\nabla(u_{1}-u_{2})\big)+\big(\nabla(u_{1}-u_{2}),\nabla(v_{1}-v_{2})\big)\\
&+\big(k\|v_{1}\|^{p}v_{1}-k\|v_{2}\|^{p}v_{2},v_{1}-v_{2}\big)\\
\geq &0,
\end{split}
\end{equation}
for all $U_1,U_2\in D(A)$, where $U_1=(u_1,v_1)^T,U_2=(u_2,v_2)^T$.

We proceed to show that
\begin{equation}\label{6-11-19}
\mathrm{Rg}(I+A)=H^{1}_0(\Omega)\times L^{2}(\Omega),
\end{equation}
 i.e., for $\forall (f_0,f_1)^T\in H^{1}_0(\Omega)\times L^{2}(\Omega)$, the equation
\begin{equation}\label{6-10-33}
  (A+I)(U)=\begin{pmatrix}-v+u\\-\Delta u+k||v||^p v+v\end{pmatrix}=\begin{pmatrix}f_0\\f_1\end{pmatrix}
\end{equation}
has a solution.

Eliminating $u$ from $(\ref{6-10-33})$ gives
\begin{equation}\label{6-10-34}
  -\Delta v+k\|v\|^{p}v+v=f_{1}+\Delta f_{0}\in H^{-1}(\Omega).
\end{equation}
Define $G:H_{0}^{1}(\Omega)\rightarrow H^{-1}(\Omega)$ by
$G(v)=-\Delta v+k\|v\|^{p}v+v$ for each $v\in H_{0}^{1}(\Omega) $.
Obviously, for each $v_1,v_2\in H_{0}^{1}(\Omega)$,
\begin{equation*}
  \big<G(v_1+\lambda v_2),v_2\big>=\big(\nabla(v_1+\lambda v_2),\nabla v_2\big)+(1+k||v_1+\lambda v_2||^p)\big(v_1+\lambda v_2, v_2\big)
\end{equation*}
is a continuous function of real variable $\lambda$.

It follows from $(\ref{6-11-6})$ that
\begin{equation*}
\begin{split}
&\big<G(v_{1})-G(v_{2}),v_{1}-v_{2}\big> \\
   =&\|\nabla (v_{1}-v_{2})\|^{2} +\|v_{1}-v_{2}\|^{2}
   +k\big(\|v_{1}\|^{p}v_{1}-\|v_{2}\|^{p}v_{2},v_{1}-v_{2}\big)  \\
   \geq&0
  \end{split}
\end{equation*}
for all $v_1,v_2\in H_{0}^{1}(\Omega)$.

Moreover we have
\begin{equation}
\displaystyle\frac{\big<G(v),v\big>}{\|\nabla v\|}= \displaystyle\frac{\|\nabla v\|^{2}+
   \| v\|^{2}+k\|v\|^{p+2}}{\|\nabla v\|}
   \longrightarrow +\infty
\end{equation}
as $\|\nabla v\|\rightarrow\infty$.

In summary, $G$ is hemicontinuous, monotone and coercive. Thus, by Lemma $\ref{lem22-2-18-3}$, $G$ is onto $H^{-1}(\Omega)$  and $(\ref{6-11-19})$ follows immediately. Combining $(\ref{6-10-30})$ and $(\ref{6-11-19})$ means $A$ is m-accretive.

For any $u_1,u_2 \in H_{0}^{1}(\Omega)$, by $(\ref{growth})$ we have
\begin{equation}\label{local lip1}
\begin{split}
 &\|f(u_1)-f(u_2)\|\\=&\left\{\int_\Omega\bigg[\int_0^1f'\big(u_2+\theta(u_1-u_2)\big)\big(u_1-u_2\big)d\theta\bigg]^2dx\right\}^{\frac{1}{2}}\\
 \leq&C\left\{\int_\Omega\big(|u_1|^{\frac{4}{N-2}}+|u_2|^{\frac{4}{N-2}}+1\big)|u_1-u_2|^{2}dx\right\}^{\frac{1}{2}}\\
 \leq&C\big(\|u_1\|_{\frac{2N}{N-2}}^{\frac{2}{N-2}}+\|u_2\|_{\frac{2N}{N-2}}^{\frac{2}{N-2}}+1\big)\|u_1-u_2\|_{\frac{2N}{N-2}}\\
  \leq&C\big(\|\nabla u_1\|^{\frac{2}{N-2}}+\|\nabla u_2\|^{\frac{2}{N-2}}+1\big)\|\nabla(u_1-u_2)\|.
\end{split}\end{equation}

Moreover, using the H\"{o}lder inequality, we have
\begin{equation}\label{5-21-1}
\begin{split}
\left\|\int_{\Omega}K(x,y)\big(v_1(y)-v_2(y)\big)dy\right\| \leq\|K\|_{L^2(\Omega\times\Omega)}||v_1-v_2||\end{split}
 \end{equation}
for any $v_1,v_2\in L^{2}(\Omega)$.

It follows from $(\ref{local lip1})$ and $(\ref{5-21-1})$ that  $B$ is locally Lipschitz.

Now that we have proved that $A$ is m-accretive, $B$ is locally Lipschitz and $\overline{D(A)}=H^{1}_0(\Omega)\times L^{2}(\Omega)$, by Lemma $\ref{lem22-2-18-1}$, we conclude that there exists $t_{max}\leq+\infty$ such that problem $(\ref{wave equa})$-$(\ref{initial condition})$  has a unique strong solution $u$ on $[0,t_{max})$ for every $(u_0,u_1)\in D(A)$ and it has a unique generalized solution $u$ on $[0,t_{max})$ for every $(u_0,u_1)\in H_{0}^{1}(\Omega)\times L^{2}(\Omega)$, moreover $[0,t_{max})$ is the maximal interval on which the solution exists. Furthermore, for both the strong solution and the generalized solution  we have
\begin{equation}\label{6-11-99}
  \lim_{t\rightarrow t_{\mathrm{max}} }\|(u,u_t)\|_{H_{0}^{1}(\Omega)\times L^{2}(\Omega)}=\infty ,\  \text{provided}\  t_{\mathrm{max}}<+\infty.
\end{equation}

\noindent\textbf{Step $2$.} Next, we will prove the global well-posedness of problem $(\ref{wave equa})$-$(\ref{initial condition})$.

We denote
\begin{equation*}
\mathcal{E}(u(t),u_t(t))=\displaystyle\frac{1}{2}\Big(\|u_t\|^{2}+\|\nabla u\|^{2}\Big)+\displaystyle\int_{\Omega}F(u)dx-\displaystyle\int_{\Omega}hudx.
\end{equation*}

Choose $\mu_0\in\mathds{R}^+\cap(-\mu,\lambda_1)$. By $(\ref{dissipativity condition})$, there exists $M>0$ such that
\begin{equation*}
f'(s)>-\mu_0, \ |s|>M.
\end{equation*}
It follows that
\begin{equation*}
\begin{cases}
F(s)\geq-\displaystyle\frac{\lambda_1+\mu_0}{4}s^2-C, &|s|>M,\\
|F(s)|\leq C,&|s|\leq M.
\end{cases}
\end{equation*}
Consequently,
\begin{equation}\begin{split}\label{22-2-18-5}
\displaystyle\int_{\Omega}F(u)dx&\geq \displaystyle\int_{\Omega_1}\Big(-\displaystyle\frac{\lambda_1+\mu_0}{4}u^2-C\Big)dx+\displaystyle\int_{\Omega_2}F(u)dx\\
&\geq -\displaystyle\frac{\lambda_1+\mu_0}{4}\displaystyle\int_{\Omega}u^2dx-C_1,
\end{split}\end{equation}
where $\Omega_1=\big\{x\in\Omega:|u(x)|>M\big\}$, $\Omega_2=\big\{x\in\Omega:|u(x)|\leq M\big\}$ and $C_1$ is some positive constant.

It is easy to get
\begin{equation}\label{22-2-18-6}
\begin{split}
\left| \displaystyle\int_{\Omega}hudx\right|\leq \displaystyle\frac{1}{16}\left(1-\displaystyle\frac{\mu_0}{\lambda_1}\right)\|\nabla u\|^2+C.
\end{split}\end{equation}

By Poincar\'e's inequality we have
\begin{equation}\label{22-2-18-9}
\|\nabla u\|^{2}\geq \lambda_{1}\|u\|^2.
\end{equation}

By $(\ref{22-2-18-5})$-$(\ref{22-2-18-9})$ we have
\begin{equation}\label{6-11-70}
\begin{split}
&\mathcal{E}(u(t),u_t(t))\\ \geq&\displaystyle\frac{1}{2}\Big(\|u_t\|^{2}+\|\nabla u\|^{2}\Big)-\displaystyle\frac{\lambda_1+\mu_0}{4}\displaystyle\int_{\Omega}u^2dx-C_1-\displaystyle\frac{1}{8}\left(1-\displaystyle\frac{\mu_0}{\lambda_1}\right)\|\nabla u\|^2-C\\
\geq & \displaystyle\frac{1}{8}\left(1-\displaystyle\frac{\mu_0}{\lambda_1}\right)\Big(\|u_t\|^{2}+\|\nabla u\|^{2}\Big)-C.
\end{split}\end{equation}

We deduce from $(\ref{growth})$ that
\begin{equation}\label{22-2-18-23}
\begin{split}
\left| \displaystyle\int_{\Omega}F(u)dx\right|&\leq \displaystyle\int_{\Omega}C(1+|u|^{\frac{2N-2}{N-2}})dx\\
&\leq C\left(\|\nabla u\|^{\frac{2N-2}{N-2}}+1\right).
\end{split}\end{equation}
Using $(\ref{22-2-18-6})$ and $(\ref{22-2-18-23})$ we obtain
\begin{equation}\label{6-11-88}
\begin{split}
\mathcal{E}(u(t),u_t(t))\leq C\left(\|u_t\|^{2}+\|\nabla u\|^{2}+\|\nabla u\|^{\frac{2N-2}{N-2}}+1\right).
\end{split}\end{equation}
Multiplying (\ref{wave equa}) by $u_{t}$ and integrating on $\Omega$ yields
\begin{equation}\label{6-11-60}
 \displaystyle\frac{d}{dt}\mathcal{E}(u(t),u_t(t))=-k\|u_t\|^{p+2}+\int_{\Omega}\Psi(u_t)u_tdx
\end{equation}
for $t\in [0,t_{max})$.

Using Young's inequality with $\epsilon$, we deduce from $(\ref{6-11-60})$ that
 \begin{equation}
\begin{split}\label{6-11-66}
 \displaystyle\frac{d}{dt}\mathcal{E}(u(t),u_t(t))&\leq-k\|u_t\|^{p+2}+\|K\|_{L^{2}(\Omega\times\Omega)}\|u_t\|^{2}\\
 &\leq-k\|u_t\|^{p+2}+\displaystyle\frac{k}{2}\|u_t\|^{p+2}+C\\
 &\leq C
 \end{split}
\end{equation}
 for $t\in [0,t_{max})$.

Integrating $(\ref{6-11-66})$, we have
\begin{equation}\label{6-11-67}
  \mathcal{E}(u(t),u_t(t))\leq \mathcal{E}(u_0,u_1)+Ct.
\end{equation}
If $t_{\mathrm{max}}<+\infty$, we duduce from $(\ref{6-11-70})$,$(\ref{6-11-88})$ and $(\ref{6-11-67})$ that
{\small\begin{equation}\label{6-11-90}
  \| \left(u(t),u_t(t)\right)\|_{H^{1}_0(\Omega)\times L^{2}(\Omega)}^2\leq  C\left(\|u_1\|^{2}+\|\nabla u_0\|^{2}+\|\nabla u_0\|^{\frac{2N-2}{N-2}}+1+t_{\mathrm{max}}\right)<+\infty.
\end{equation}}By the definition of the generalized solution, it is easy to verify that $(\ref{6-11-90})$ is also true for the generalized solution.
Thus by $(\ref{6-11-99})$, we have proved the global existence and uniqueness of the strong solution as well as the generalized solution.

\vspace*{4pt}\noindent\textbf{Step $3$.} Finally, we will verify that the generalized solution to problem $(\ref{wave equa})$-$(\ref{initial condition})$ is also weak.

Obviously, the strong solution to problem $(\ref{wave equa})$-$(\ref{initial condition})$ is also weak.

Let $u(t)$ be the generalized solution to problem $(\ref{wave equa})$-$(\ref{initial condition})$, then by definition there exists a sequence of strong solutions $\{u^{j}(t)\}$ to problem $(\ref{wave equa})$-$(\ref{initial condition})$ with initial data $(u_0^{j},u_1^{j})$ instead of $(u_{0},u_{1})$ such that
\begin{equation}\label{6-12-15}
(u^{j},u_t^{j})\rightarrow (u,u_t)\  \text{in}\ \  C([0,T];H^1_0(\Omega)\times L^2(\Omega))\ \ \text{as}\ \  j\rightarrow+\infty.
\end{equation}
We have\begin{equation}
\begin{split}\label{22-2-18-37}
\int_{\Omega}u_{t}^j(t,x)\psi(x)dx =&\int_{\Omega}u_{1}^j\psi dx + \int^{t}_{0}\left[\int_{\Omega\times\Omega}K(x,y)u_{t}^j(\tau,y)\psi(x)dxdy\right.\\
&\  +\int_{\Omega}h(x)\psi(x)dx -\int_{\Omega}\nabla u^j(\tau,x) \nabla \psi(x) dx  \\
&\ \left.-k||u_{t}^j(\tau)||^p \int_{\Omega}u_t^j(\tau,x)\psi(x) dx- \int_{\Omega}f(u^j(\tau,x))\psi(x)dx\right]d\tau
\end{split}
\end{equation}
holds for every $\psi \in H^1_0(\Omega) $ and for almost all $t \in [0,T]$.

Define $D:L^{2}(\Omega)\rightarrow L^{2}(\Omega)$ by
$G(v)=\|v\|^{p}v$ for each $v\in L^{2}(\Omega)$. Inequality $(\ref{6-11-6})$ indicates that $D$ is accretive. Besides, it is apparent that $D$ is hemicontinuous and bounded. Consequently, due to Lemma $\ref{lem22-2-18-30}$, $D$ is demicontinuous. Thus, we have
\begin{equation}\label{6-12-10}
||u_{t}^j(\tau)||^p \int_{\Omega}u_t^j(\tau,x)\psi(x) dx\rightarrow ||u_{t}(\tau)||^p \int_{\Omega}u_t(\tau,x)\psi(x) dx\ \ \text{as}\ j\rightarrow +\infty.
\end{equation}
Since by $(\ref{6-12-15})$ there exists $J\in\mathds{N}$ such that $\max\limits_{\tau\in[0,T]}\|u_t^j(\tau)\|\leq \max\limits_{\tau\in[0,T]}\|u_t(\tau)\|+1$ for every $j\geq J$, we have
\begin{equation}\label{6-12-17}
\begin{split}
\left|||u_{t}^j(\tau)||^p \int_{\Omega}u_t^j(\tau,x)\psi(x) dx\right|\leq \left(\max\limits_{\tau\in[0,T]}\|u_t(\tau)\|+1\right)^{p+1}\|\psi\|
\leq C.
\end{split}
\end{equation}
By the Lebesgue convergence theorem, it follows from  $(\ref{6-12-10})$ and $(\ref{6-12-17})$ that
\begin{equation}\label{6-12-26}
\begin{split} \lim_{j\rightarrow +\infty}\int^{t}_{0}\left[||u_{t}^j(\tau)||^p \int_{\Omega}u_t^j(\tau,x)\psi(x) dx\right]d\tau=\int^{t}_{0}\left[||u_{t}(\tau)||^p \int_{\Omega}u_t^j(\tau,x)\psi(x) dx\right]d\tau.
\end{split}
\end{equation}
Letting $j\rightarrow +\infty$ in  $(\ref{22-2-18-37})$ and using  $(\ref{6-12-15})$ and  $(\ref{6-12-26})$, we see that $u(t)$ satisfies $(\ref{6-12-1})$, which completes the proof.
\end{proof}

By Theorem $\ref{thm22-2-18-1}$, problem $(\ref{wave equa})$-$(\ref{initial condition})$ generates an evolution semigroup $\{S(t)\}_{t\geq0}$ in the space $H^1_0(\Omega)\times L^2(\Omega)$ by the formula
$S(t)(u_0,u_1)=(u(t),u_t(t))$,  where $(u_0,u_1)\in H^1_0(\Omega)\times L^2(\Omega)$ and $u(t)$ is the weak solution to problem $(\ref{wave equa})$-$(\ref{initial condition})$.

\section{Dissipativity}
In this section, we will prove the dissipativity of the dynamical system generated by the weak solution to  problem $(\ref{wave equa})$-$(\ref{initial condition})$, which is a necessary condition for the existence of the global attractor.
 \begin{theorem}\label{dissipativity theom}
Under Assumption $\ref{21-8-29-8}$, the dynamical system $\big(H_{0}^{1}(\Omega)\times L^{2}(\Omega),\break\{S(t)\}_{t\geq0}\big)$
 generated by the weak solution of problem $(\ref{wave equa})$-$(\ref{initial condition})$ is dissipative, i.e., there exists $R>0$
 satisfying the property: for any bounded set $B$ in $H_{0}^{1}(\Omega)\times L^{2}(\Omega)$,
 there exists $t_{0}(B)$ such that $\|S(t)y\|_{H_{0}^{1}(\Omega)\times L^{2}(\Omega)}\leq R$
 for all $y\in B$ and $t\geq t_{0}(B)$.
\end{theorem}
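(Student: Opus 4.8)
The plan is to construct a perturbed energy functional whose time derivative is strictly dissipative once the energy is large, and then to invoke the ultimate-dissipativity mechanism of Chueshov and Lasiecka to overcome the degeneracy produced by the nonlocal coefficient. First I would start from the energy identity $(\ref{6-11-60})$,
\[
\frac{d}{dt}\mathcal{E}(u,u_t)=-k\|u_t\|^{p+2}+\int_{\Omega}\Psi(u_t)u_t\,dx ,
\]
and deal with the anti-damping. Since $(\Psi(u_t),u_t)\le\|K\|_{L^2(\Omega\times\Omega)}\|u_t\|^2$ and $p>0$, the scalar map $s\mapsto\|K\|_{L^2(\Omega\times\Omega)}s^2-\tfrac{k}{2}s^{p+2}$ is bounded above, whence
\[
\frac{d}{dt}\mathcal{E}(u,u_t)\le-\frac{k}{2}\|u_t\|^{p+2}+C_0 .
\]
This controls the velocity but gives no dissipation in $\|\nabla u\|$; to recover it I would introduce the multiplier $\Phi(t)=(u_t,u)$ and set $V=\mathcal{E}+\epsilon\Phi$, with $\epsilon>0$ fixed later. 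By the lower bound $(\ref{6-11-70})$ and Poincar\'e's inequality, $V\ge\tfrac{c}{2}(\|u_t\|^2+\|\nabla u\|^2)-C$ for all small $\epsilon$, so $V$ is coercive and comparable to $\mathcal{E}$.

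Next I would differentiate $\Phi$ along the flow. Substituting $u_{tt}$ from $(\ref{wave equa})$ gives
\[
\frac{d}{dt}\Phi=\|u_t\|^2-\|\nabla u\|^2-(f(u),u)+(\Psi(u_t),u)+(h,u)-k\|u_t\|^p(u_t,u).
\]
The routine terms I would treat as follows: the dissipativity condition $(\ref{dissipativity condition})$ yields $-(f(u),u)\le\tfrac{\mu_0}{\lambda_1}\|\nabla u\|^2+C$ with $\mu_0/\lambda_1<1$ (by the argument leading to $(\ref{22-2-18-5})$), while $(h,u)$ and $(\Psi(u_t),u)$ are handled by H\"older and Young with a small parameter $\delta$, contributing $\delta\|\nabla u\|^2+C_\delta\|u_t\|^2+C_\delta$. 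Collecting these, the coefficient of $\|\nabla u\|^2$ becomes $-c_1=-(1-\mu_0/\lambda_1-2\delta)<0$, and the leftover $\|u_t\|^2$ contributions are absorbed into $-\tfrac{k}{2}\|u_t\|^{p+2}$ by Young's inequality at the cost of an additive constant.

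The hard part is the nonlocal damping cross term $-k\|u_t\|^p(u_t,u)$. Estimating $|k\|u_t\|^p(u_t,u)|\le\tfrac{k}{2}\|u_t\|^{p+2}+\tfrac{k}{2\lambda_1}\|u_t\|^p\|\nabla u\|^2$, the last summand multiplies $\|\nabla u\|^2$ by the state-dependent factor $\tfrac{k}{2\lambda_1}\|u_t\|^p$, which is small only when $\|u_t\|$ is small. Consequently the $\|\nabla u\|^2$-dissipation survives (the net coefficient stays $\le-\epsilon c_1/2$) only on a sublevel set $\{\mathcal{E}\le\rho\}$, and only if $\epsilon$ is chosen small depending on $\rho$. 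This is precisely the sublinear degeneracy highlighted in the introduction: the dissipation coefficient weakens as the energy grows through its kinetic part. Following \cite{Chueshov2008}, I would therefore fix $\epsilon=\epsilon(\rho)$ so that on each such sublevel set, after splitting $\|u_t\|^{p+2}\ge\eta\|u_t\|^2$ when $\|u_t\|^p\ge\eta$ and using the smallness of $\|u_t\|^2$ otherwise, one obtains
\[
\frac{d}{dt}V\le-\gamma(\rho)\,\big(\|u_t\|^2+\|\nabla u\|^2\big)+C .
\]

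Finally, combining this inequality with the coercive lower bound for $V$ and the super-quadratic upper bound $(\ref{6-11-88})$ for $\mathcal{E}$, I would reduce the matter to a scalar differential inequality $\tfrac{d}{dt}V\le-\gamma\,\phi(V)+C$ with $\phi$ increasing and unbounded, and conclude by the barrier/ODE-comparison lemma of \cite{Chueshov2008,Haraux}. The true obstacle is the self-consistency of this construction---namely, ensuring that the universal ultimate bound produced at level $\rho$ lies below $\rho$, so that trajectories enter and remain in the region where the estimate holds---and this is exactly what the ultimate-dissipativity mechanism of Chueshov--Lasiecka is designed to supply. It delivers a bounded absorbing ball of radius $R$ independent of the initial data, which is the assertion of the theorem.
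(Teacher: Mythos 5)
Your scaffolding (the perturbed functional $V=\mathcal{E}+\epsilon(u_t,u)$, the multiplier computation, the final appeal to Chueshov--Lasiecka ultimate dissipativity) is the same as the paper's, but your treatment of the cross term $-\epsilon k\|u_t\|^p(u_t,u)$ --- which you rightly single out as the hard part --- has a gap that disables precisely the mechanism you invoke at the end. Your splitting $k\|u_t\|^{p}|(u_t,u)|\le\frac{k}{2}\|u_t\|^{p+2}+\frac{k}{2\lambda_1}\|u_t\|^p\|\nabla u\|^2$ puts the state-dependent factor $\|u_t\|^p$ on $\|\nabla u\|^2$, where it must compete with the only gradient dissipation available, $-\epsilon c_1\|\nabla u\|^2$. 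Both terms carry the same factor $\epsilon$, so shrinking $\epsilon$ does not improve this ratio; you are forced onto a sublevel set $\{\mathcal{E}\le\rho\}$ and, absorbing $\epsilon\|u_t\|^p\|\nabla u\|^2$ into $-\frac{k}{4}\|u_t\|^{p+2}$ via $\|\nabla u\|^2\le C\rho$ (your case $\|u_t\|^p\ge\eta$), to the constraint $\epsilon(\rho)\lesssim\rho^{-1}$. Consequently your final inequality $\frac{d}{dt}V\le-\gamma(\rho)\phi(V)+C$ has rate $\gamma(\rho)\sim\rho^{-1}$, and its ultimate bound is $\phi^{-1}(C/\gamma(\rho))\gtrsim\phi^{-1}(C\rho)\sim\rho^{(N-1)/(N-2)}$, which lies \emph{above} $\rho$, not below it. The Chueshov--Lasiecka mechanism cannot supply the self-consistency here: its hypothesis (formula (3.44) of \cite{Chueshov2008}, used verbatim by the paper as $(\ref{6-8-41})$) is that the state-dependent coefficient enters \emph{sublinearly} in $V$, so that the absorption map $\rho\mapsto C\rho^{\gamma}$, $\gamma<1$, iterates down to a universal constant; with your estimates the exponent is $\ge 1$ and the iteration diverges. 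The paper avoids this by splitting the other way in $(\ref{6-6-5})$, using Young with exponents $\frac{p+2}{p+1}$ and $p+2$: $k\|u_t\|^{p+1}\|\nabla u\|\le Ck\|u_t\|^{p+2}\|\nabla u\|^{\frac{p}{p+1}}+\frac{1}{12}(1-\frac{\mu_0}{\lambda_1})\|\nabla u\|^2$, so the state dependence attaches to the sign-definite damping as the bracket $\big[\frac12-C\epsilon(V_\epsilon+C)^{\frac{p}{2(p+1)}}\big]$ in $(\ref{6-7-10})$, with sublinear exponent $\frac{p}{2(p+1)}<\frac12$.

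A secondary deviation compounds this. You bound $-(f(u),u)\le\frac{\mu_0}{\lambda_1}\|\nabla u\|^2+C$, so your dissipation is only in $\|u_t\|^2+\|\nabla u\|^2$, and the superquadratic bound $(\ref{6-11-88})$ forces you into the sublinear dissipation $-\gamma\phi(V)$ with $\phi^{-1}(s)\sim s^{(N-1)/(N-2)}$, which inflates the ultimate bound by a further superlinear power. The paper instead estimates $-\int_\Omega f(u)u\,dx$ as in $(\ref{6-6-4})$, keeping $-\int_\Omega F(u)\,dx$ so that the $\epsilon$-terms reassemble into $-\frac23(1-\frac{\mu_0}{\lambda_1})\epsilon V_\epsilon$, i.e.\ dissipation \emph{linear} in $V_\epsilon$ itself, giving an ultimate bound $O(1/\epsilon)$. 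To repair your argument you need both corrections: redo the cross-term estimate as in $(\ref{6-6-5})$ and retain the potential energy in the dissipation as in $(\ref{6-6-4})$; that is exactly the paper's route to $(\ref{6-8-41})$, to which Theorem 3.15 of \cite{Chueshov2008} then applies with $\gamma=\frac{p}{2(p+1)}$.
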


\begin{proof}
Choose $\mu_0\in\mathds{R}^+\cap(-\mu,\lambda_1)$. By $(\ref{dissipativity condition})$, there exists $M>0$ such that
\begin{equation}\label{6-6-1}
f'(s)>-\mu_0, \ |s|>M.
\end{equation}
It follows that
\begin{equation*}
\begin{cases}
F(s)\geq-\displaystyle\frac{\lambda_1+\mu_0}{4}s^2-C, &|s|>M;\\
|F(s)|\leq C,&|s|\leq M.
\end{cases}
\end{equation*}
Consequently,
\begin{equation}\begin{split}\label{6-6-2}
\displaystyle\int_{\Omega}F(u)dx&\geq \displaystyle\int_{\Omega_1}\Big(-\displaystyle\frac{\lambda_1+\mu_0}{4}u^2-C\Big)dx+\displaystyle\int_{\Omega_2}F(u)dx\\
&\geq -\displaystyle\frac{\lambda_1+\mu_0}{4}\displaystyle\int_{\Omega}u^2dx-C_1,
\end{split}\end{equation}
where $\Omega_1=\big\{x\in\Omega:|u(x)|>M\big\}$, $\Omega_2=\big\{x\in\Omega:|u(x)|\leq M\big\}$ and $C_1$ is some positive constant.

Let
\begin{equation*}
V_{\epsilon}(t)=\displaystyle\frac{1}{2}\Big(\|u_t\|^{2}+\|\nabla u\|^{2}\Big)+\displaystyle\int_{\Omega}F(u)dx-\displaystyle\int_{\Omega}hudx+\epsilon \displaystyle\int_{\Omega}u_tudx.
\end{equation*}
Since by Poincar\'e's inequality we have
\begin{equation}\label{9-4-1}
\|\nabla u\|^{2}\geq \lambda_{1}\|u\|^2,
\end{equation}
there exists $\epsilon_0>0$ such that
\begin{equation}\label{6-6-9}
\left| \epsilon \displaystyle\int_{\Omega}u_tudx\right|\leq \displaystyle\frac{1}{16}\left(1-\displaystyle\frac{\mu_0}{\lambda_1}\right)\Big(\|u_t\|^2+\|\nabla u\|^2\Big)\end{equation}
holds for all $\epsilon\leq \epsilon_{0}$.

Hereafter we assume  $\epsilon \in (0,\epsilon_0)$.

We also have
\begin{equation}\label{6-7-1}
\left| \displaystyle\int_{\Omega}hudx\right|\leq \displaystyle\frac{1}{16}\left(1-\displaystyle\frac{\mu_0}{\lambda_1}\right)\|\nabla u\|^2+C.
\end{equation}
We deduce from $(\ref{growth})$ that
\begin{equation}\label{22-4-25-1}
\begin{split}
\left| \displaystyle\int_{\Omega}F(u)dx\right|\leq C\left(\|\nabla u\|^{\frac{2N-2}{N-2}}+1\right).
\end{split}\end{equation}

 We deduce from $(\ref{6-6-2})$-$(\ref{22-4-25-1})$ that
\begin{equation}\label{6-7-2}
\displaystyle\frac{1}{8}\left(1-\displaystyle\frac{\mu_0}{\lambda_1}\right)\Big(\|u_t\|^{2}+\|\nabla u\|^{2}\Big)-C\leq V_{\epsilon}(t) \leq C\Big(\|u_t\|^2+\|\nabla u\|^2+\|\nabla u\|^{\frac{2N-2}{N-2}}+1\Big).
\end{equation}

Multiplying (\ref{wave equa}) by $u_{t}+\epsilon u$ and integrating on $\Omega$ yields
\begin{equation}\begin{split}\label{6-6-3}
 \displaystyle\frac{d}{dt}V_{\epsilon}(t)=&-k\|u_t\|^{p+2}+\int_{\Omega}\Psi(u_t)u_tdx+\epsilon\bigg[-\|\nabla u\|^{2}+\|u_t\|^{2}\\&- k\|u_t\|^{p}\int_{\Omega}u_tudx-\displaystyle\int_{\Omega}f(u)udx+\int_{\Omega}\Psi(u_t)udx+\displaystyle\int_{\Omega}hudx\bigg].
\end{split}\end{equation}

We estimate the terms on the right hand side of identity $(\ref{6-6-3})$ as follows:
\begin{equation}\begin{split}\label{6-6-5}
\left|-k\|u_t\|^{p}\int_{\Omega}u_tudx\right|
 &\leq k C\|u_t\|^{p+1}\|\nabla u\|\\
 &\leq C k\Big(\|u_t\|^{p+1}\|\nabla u\|^{\frac{p}{p+2}}\Big)^
 {\frac{p+2}{p+1}}+\displaystyle\frac{1}{12}\left(1-\displaystyle\frac{\mu_0}{\lambda_1}\right)\|\nabla u\|^{\frac{2}{p+2}(p+2)}\\
 &=C k\|u_t\|^{p+2}\|\nabla u\|^{\frac{p}{p+1}}+\displaystyle\frac{1}{12}\left(1-\displaystyle\frac{\mu_0}{\lambda_1}\right)\|\nabla u\|^{2};
\end{split}\end{equation}
\begin{equation}\begin{split}\label{6-6-8}
\left|\int_{\Omega}\Psi(u_t)udx\right|
\leq \displaystyle\frac{1}{12}\left(1-\displaystyle\frac{\mu_0}{\lambda_1}\right)\|\nabla u\|^{2}+C\|u_t\|^{2};
\end{split}\end{equation}
\begin{equation}\label{6-6-7}
\left|\int_{\Omega}\Psi(u_t)u_tdx\right|\leq \|K\|_{L^{2}(\Omega\times\Omega)}\|u_t\|^{2}.
 \end{equation}
We infer from $(\ref{6-6-1})$ that
\begin{equation}\label{9-4-6}
F(s)\leq f(s)s+\displaystyle\frac{\mu_0}{2}s^2+C, \ \ |s|>M.
\end{equation}
By $(\ref{9-4-1})$ and $(\ref{9-4-6})$ we have
\begin{equation}\begin{split}\label{6-6-4}
-\displaystyle\int_{\Omega}f(u)udx
&\leq-\Big(\displaystyle\int_{\Omega}F(u)dx+\displaystyle\frac{\lambda_1+\mu_0}{4}\displaystyle\int_{\Omega}u^2dx+C_1\Big)\\
&\quad+\displaystyle\frac{1}{4}\left(\displaystyle\frac{3\mu_0}{\lambda_1}+1\right)\|\nabla u\|^{2}+C.
\end{split}\end{equation}

Using  $(\ref{6-6-2})$,  $(\ref{6-7-1})$, $(\ref{6-7-2})$,  $(\ref{6-6-5})$, $(\ref{6-6-8})$, $(\ref{6-6-7})$, $(\ref{6-6-4})$ and Young's inequality with $\epsilon$, we deduce from $(\ref{6-6-3})$ that
\begin{equation}\begin{split}\label{6-7-10}
&\frac{d}{dt}V_{\epsilon}(t)\\\leq&-k\|u_t\|^{p+2}\Big(1-C\epsilon\|\nabla u\|^{\frac{p}{p+1}}\Big)+\displaystyle\frac{k}{2}\| u_t\|^{p+2}+C\\&-\epsilon\bigg[\displaystyle\frac{1}{2}\left(1-\displaystyle\frac{\mu_0}{\lambda_1}\right)\Big(\|u_t\|^2+\|\nabla u\|^2\Big)+\Big(\displaystyle\int_{\Omega}F(u)dx+\displaystyle\frac{\lambda_1+\mu_0}{4}\displaystyle\int_{\Omega}u^2dx+C_1\Big)\bigg]\\
 \leq & -k\|u_t\|^{p+2}\bigg[\displaystyle\frac{1}{2}-C\epsilon\Big(V_{\epsilon}(t)+C\Big)^{\frac{p}{2(p+1)} } \bigg]-\displaystyle\frac{2}{3}\left(1-\displaystyle\frac{\mu_0}{\lambda_1}\right)\epsilon V_{\epsilon}(t)+C.
\end{split}\end{equation}

Integrating $(\ref{6-7-10})$ from $s$ to $t$ and rescaling  $\epsilon$, we have
\begin{equation}\label{6-8-41}
V_{\epsilon}(t)\leq e^{-\epsilon(t-s)} V_{\epsilon}(s)+\frac{C}{\epsilon}- \displaystyle\int_s^te^{-\epsilon(t-\tau)}k\|u_t\|^{p+2}\bigg[\displaystyle\frac{1}{2}-C\epsilon\Big(V_{\epsilon}(\tau)+C\Big)^{\frac{p}{2(p+1)} } \bigg]d\tau
\end{equation}
for all $t\geq s\geq 0$.

Inequality $(\ref{6-8-41})$ is exactly formula $(3.44)$ in Theorem 3.15 in \cite{Chueshov2008} with $b(\cdot)=C$ and $\gamma=\frac{p}{2(p+1)}$, and thus Theorem
3.15 in \cite{Chueshov2008} can be directly applied to gain the ultimate dissipativity of the dynamical system generated by the problem $(\ref{wave equa})$-$(\ref{initial condition})$.
\end{proof}

\section{The existence of the global attractor}
Having verified the dissipativity, by Lemma $\ref{lemma 1-14-10}$, in order to establish the  existence of the global attractor, we only need to prove that the system is asymptotically  smooth.  Further,  by Lemma $\ref{lemma 1-9-1}$, it is sufficient to verify inequality $(\ref{22-2-18-41})$. This is exactly what we do when proving the following theorem.

\begin{theorem}\label{thm20-12-26-1}
Under Assumption~$\ref{21-8-29-8}$, the dynamical system generated by the weak solution of problem $(\ref{wave equa})$-$(\ref{initial condition})$ possesses a global
attractor.
\end{theorem}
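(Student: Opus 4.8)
The plan is to reduce to a difference-energy estimate and then verify the repeated inferior limit condition~(\ref{22-2-18-41}). Theorem~\ref{dissipativity theom} already supplies a bounded absorbing set, so by Lemma~\ref{lemma 1-14-10} the existence of the global attractor is equivalent to asymptotic smoothness, and by Lemma~\ref{lemma 1-9-1} it suffices to establish~(\ref{22-2-18-41}) on an arbitrary bounded positively invariant set $B$. Fix such a $B$ and an $\epsilon>0$, take any $\{y_n\}\subseteq B$, and write $(u^n(t),u^n_t(t))=S(t)y_n$. By positive invariance together with Theorem~\ref{dissipativity theom}, the trajectories are bounded in $C([0,T];H^1_0(\Omega)\times L^2(\Omega))$ uniformly in $n$, with bound depending only on $B$. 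Put $w=w^{nm}=u^n-u^m$ and $E_w(t)=\frac12(\|w_t\|^2+\|\nabla w\|^2)$, so that $\mathrm{dist}(S(T)y_n,S(T)y_m)^2=2E_w(T)$ is the quantity to be controlled.

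First I would write two energy relations for $w$, which solves $w_{tt}-\Delta w+k(\|u^n_t\|^pu^n_t-\|u^m_t\|^pu^m_t)+f(u^n)-f(u^m)=\Psi(w_t)$. Testing with $w_t$ and integrating over $[0,T]$, Lemma~\ref{lemma l2.3} (applied in $L^2(\Omega)$ with exponent $p+2\geq2$, so the first branch) bounds the damping contribution below by $kC_{p+2}\int_0^T\|w_t\|^{p+2}\,dt\geq0$, and the anti-damping obeys $\int_0^T(\Psi(w_t),w_t)\,dt\leq\|K\|_{L^2(\Omega\times\Omega)}\int_0^T\|w_t\|^2\,dt$. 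Testing instead with $w$ yields an equipartition identity controlling $\int_0^TE_w\,dt$ in terms of $\int_0^T\|w_t\|^2\,dt$, the boundary term $(w_t,w)\big|_0^T$, and several integrals that are of lower order in $w$. Combining the two relations, the goal is a stabilizability inequality of the schematic form $E_w(T)\leq q(T)\sup_{[0,T]}E_w+C\,\Phi_T(u^n,u^m)$, in which $\Phi_T$ gathers only lower-order quantities and $q(T)\to0$ as $T\to\infty$.

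For the compact part I would invoke Lemma~\ref{lemma 1-11-1}: since $u^n$ is bounded in $L^\infty(0,T;H^1_0(\Omega))$, $u^n_t$ in $L^\infty(0,T;L^2(\Omega))$, and $H^1_0\hookrightarrow\hookrightarrow L^2$, a subsequence satisfies $u^n\to u$ in $C([0,T];L^2(\Omega))$ (hence in $C([0,T];L^q)$ for every subcritical $q$), while $u^n_t\rightharpoonup u_t$ weakly-$*$ in $L^\infty(0,T;L^2)$. Under the repeated limit $\liminf_m\liminf_n$ both $u^n$ and $u^m$ tend to the same $u$, so every lower-order term in $\Phi_T$ vanishes: the $L^2$-boundary terms, the damping$\,\times w$ and $\Psi(w_t)\times w$ integrals, and $\int_0^T(f(u^n)-f(u^m),w)\,dt$ — here $f(u^n)-f(u^m)$ is bounded in $L^2(\Omega)$ by the critical bound~(\ref{growth}) (exactly as in the Lipschitz estimate of Theorem~\ref{thm22-2-18-1}) while $w\to0$ in $C([0,T];L^2)$. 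The anti-damping integral, whose kernel $K$ is Hilbert–Schmidt and hence induces a compact operator, converges along the double limit by a weak–strong pairing.

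\textbf{The main obstacle} is twofold and intertwined. The first source is the critical term $\int_0^T(f(u^n)-f(u^m),w_t)\,dt$: because~(\ref{growth}) is exactly critical, $u\mapsto f(u)$ is bounded but \emph{not} compact from $H^1_0(\Omega)$ into $L^2(\Omega)$, and $w_t$ cannot be paired off by Hölder–Sobolev since strong convergence of $w$ in the critical norm $L^{2N/(N-2)}(\Omega)$ is unavailable. Following~\cite{Chueshov2008}, I would expand $(f(u^n)-f(u^m),u^n_t-u^m_t)$ into the time derivative of the potential increments $\int_\Omega[F(u^n)+F(u^m)]\,dx$ plus the cross terms $(f(u^n),u^m_t)+(f(u^m),u^n_t)$; sending $n\to\infty$ and then $m\to\infty$, the cross terms pass to the limit by weak–strong pairing, and, once the energy convergence $\int_\Omega F(u^n(t))\,dx\to\int_\Omega F(u(t))\,dx$ at $t=0,T$ is justified (the delicate no-concentration point at criticality), the whole expression telescopes to $0$. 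The second source is the damping itself: being nonlocal, nonlinear and weak, it admits no coercive lower bound of the form $\gtrsim E_w$ (Lemma~\ref{lemma l2.3} only yields $\|w_t\|^{p+2}$, which degenerates as $\|w_t\|\to0$), so the decay factor $q(T)\to0$ cannot be produced by a linear Gronwall argument and must instead be extracted through the sublinear/weak-dissipation machinery of~\cite{Chueshov2008} already used for Theorem~\ref{dissipativity theom}, in combination with Lemma~\ref{lemma l2.3}. Assembling the stabilizability inequality with $\Phi_T\to0$ and $q(T)$ small gives $\liminf_m\liminf_n E_{w^{nm}}(T)\leq\epsilon$ for a suitable $T=T(\epsilon,B)$, which is precisely~(\ref{22-2-18-41}); asymptotic smoothness follows from Lemma~\ref{lemma 1-9-1}, and the global attractor from Lemma~\ref{lemma 1-14-10}.
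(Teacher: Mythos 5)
Your overall architecture coincides with the paper's own proof: reduce via Theorem~\ref{dissipativity theom} and Lemma~\ref{lemma 1-14-10} to asymptotic smoothness, verify the repeated-liminf condition $(\ref{22-2-18-41})$ of Lemma~\ref{lemma 1-9-1}, run the two energy identities for the difference $w=u^n-u^m$ (test with $w_t$, integrate in $t$; test with $w$), discard the damping pairing by the monotonicity in Lemma~\ref{lemma l2.3}, and telescope the critical term $(f(u^n)-f(u^m),w_t)$ into potential increments plus cross terms handled by weak--strong pairing. One secondary correction: the decay factor is not produced by the ``sublinear/weak-dissipation machinery'' of \cite{Chueshov2008} used for Theorem~\ref{dissipativity theom}; the paper gets it elementarily, by the splitting $\|w_t\|^2\leq\frac{\epsilon}{2}+C_{\epsilon}\|w_t\|^{p+2}$ combined with Lemma~\ref{lemma l2.3} and time-averaging of the energy identity, which yields $E^{n,m}(T)\leq \frac{C_{\epsilon,B}}{T}+\frac{\epsilon}{2}+(\text{terms vanishing under the double liminf})$.

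The genuine gap is your treatment of the anti-damping term $\int_0^T(\Psi(w_t),w_t)\,dt$. You claim it ``converges along the double limit by a weak--strong pairing'' because $K$ is Hilbert--Schmidt, hence $\Psi$ is compact on $L^2(\Omega)$. This fails: both factors $w_t=u^n_t-u^m_t$ converge only weakly-$*$, and compactness of $\Psi$ on $L^2(\Omega)$ does not make the induced operator $v(\cdot)\mapsto\Psi(v(\cdot))$ compact on $L^2(0,T;L^2(\Omega))$, because oscillations in time survive. Concretely, for $v_n(t,x)=\sin(nt)\phi(x)$ with $\Psi\phi\neq0$ one has $v_n\rightharpoonup 0$ in $L^2(0,T;L^2(\Omega))$ yet $\int_0^T(\Psi v_n,v_n)\,dt\rightarrow\frac{T}{2}(\Psi\phi,\phi)\neq0$; so in the four-term expansion of $(\Psi w_t,w_t)$ the cross terms pass to the limit but the diagonal terms $\int_0^T(\Psi u^n_t,u^n_t)\,dt$ need not converge to $\int_0^T(\Psi u_t,u_t)\,dt$, and the term cannot be dropped by sign since it is the energy-supplying one. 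What is missing is exactly the velocity-compactness step of the paper: from equation $(\ref{wave equa})$ one bounds $\|\mathcal{A}^{-\frac12}u^{(n)}_{tt}(t)\|\leq C_B$, introduces the space $V$, the completion of $L^2(\Omega)$ in the norm $\|\Psi(\cdot)\|+\|\mathcal{A}^{-\frac12}\cdot\|$ (so that $L^2(\Omega)\hookrightarrow\hookrightarrow V$ precisely because $K\in L^2(\Omega\times\Omega)$), and applies Lemma~\ref{lemma 1-11-1}(i) to $\{u^{(n)}_t\}$ to obtain precompactness in $L^1(0,T;V)$, hence $\int_0^T\|\Psi(u^{(n)}_t-u^{(m)}_t)\|\,dt\rightarrow0$ along a subsequence, which is $(\ref{1-14-4})$ and is what actually controls all $\Psi$-terms in $(\ref{1-10-9})$. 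Without this use of the equation (through the $u_{tt}$ bound) and Simon's lemma applied to the velocities --- not merely to $u^{(n)}$ itself, as in your proposal --- the anti-damping contribution remains uncontrolled and your stabilizability inequality cannot be closed.
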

\begin{proof}
Let~$B$~be a positively invariant bounded set in $H_{0}^{1}(\Omega)\times L^{2}(\Omega)$.

For any sequence $\big\{(u_{0}^{(n)},u_{1}^{(n)})\big\}_{n=1}^{\infty}$ in $B$, we set $S(t) (u_{0}^{(n)},u_{1}^{(n)})=\big(u^{(n)}(t),\break u_t^{(n)}(t)\big)$. It follows from the positive invariance property of~$B$~that
\begin{equation}\label{4.3}
  \big\|\big(u^{(n)}(t),u_t^{(n)}(t)\big)\big\|_{ H^1_0(\Omega)\times L^2(\Omega)}\leq C_B,\ \forall t>0,n\in\mathbb{N}.
\end{equation}
Write
\begin{equation*}
E^{n,m}(t)=\frac{1}{2}\left[\|\nabla(u^{(n)}(t)-u^{(m)}(t))\|^2+\|u^{(n)}_{t}(t)-u^{(m)}_{t}(t)\|^2\right].
\end{equation*}

\vspace*{4pt}\noindent\textbf{Step 1.} We first estimate~$E^{n,m}(T)$.

The difference $u^{(n)}-u^{(m)}$ satisfies
\begin{equation}\label{1-9-2}
\begin{split}
&u^{(n)}_{tt}-u^{(m)}_{tt}-\triangle(u^{(n)}-u^{(m)})+k\|u^{(n)}_{t}\|^pu^{(n)}_{t}-k\|u^{(m)}_{t}\|^pu^{(m)}_{t}\\
  =&-f(u^{(n)})+f(u^{(m)})+\Psi(u^{(n)}_{t}-u^{(m)}_{t}).
\end{split}
\end{equation}
Multiplying $(\ref{1-9-2})$ by $(u^{(n)}_{t}(t)-u^{(m)}_{t}(t))$ in $L^2(\Omega)$ and then integrating from $t$ to $T$, we obtain
\begin{equation}\label{1-9-3}
\begin{split}
  &E^{n,m}(T)\\=&E^{n,m}(t)+\int_t^T\int_{\Omega}\Big[\big(\Psi(u^{(n)}_{t}(\tau)-u^{(m)}_{t}(\tau))\big)\big(u^{(n)}_{t}(\tau)-u^{(m)}_{t}(\tau)\big)\\
  &-\big(f(u^{(n)}(\tau))-f(u^{(m)}(\tau))\big)\big(u^{(n)}_{t}(\tau)-u^{(m)}_{t}(\tau)\big)\\&-\big(k\|u^{(n)}_{t}(\tau)\|^pu^{(n)}_{t}(\tau)-k\|u^{(m)}_{t}(\tau)\|^pu^{(m)}_{t}(\tau)\big)\big(u^{(n)}_{t}(\tau)-u^{(m)}_{t}(\tau)\big)\Big]dxd\tau.
\end{split}
\end{equation}
Integrating $(\ref{1-9-3})$ with respect to $t$ between $0$ and $T$  gives
\begin{equation}\label{1-9-4}
\begin{split}
  &T\cdot E^{n,m}(T)\\=&\int_0^TE^{n,m}(t)dt+\int_0^T\int_t^T\int_{\Omega}\Big[\big(\Psi(u^{(n)}_{t}(\tau)-u^{(m)}_{t}(\tau))\big)\big(u^{(n)}_{t}(\tau)-u^{(m)}_{t}(\tau)\big)
\end{split}
\end{equation}\begin{equation*}
\begin{split}  &-\big(f(u^{(n)}(\tau))-f(u^{(m)}(\tau))\big)\big(u^{(n)}_{t}(\tau)-u^{(m)}_{t}(\tau)\big)\\&-\big(k\|u^{(n)}_{\tau}(\tau)\|^pu^{(n)}_{t}(\tau)-k\|u^{(m)}_{t}(\tau)\|^pu^{(m)}_{t}(\tau)\big)\big(u^{(n)}_{t}(\tau)-u^{(m)}_{t}(\tau)\big)\Big]dxd\tau dt.
\end{split}
\end{equation*}
 Multiplying $(\ref{1-9-2})$ by $(u^{(n)}(t)-u^{(m)}(t))$ in $L^2(\Omega)$ and then integrating from $0$ to $T$, we obtain
\begin{equation}\label{1-9-5}
\begin{split}
  & \int_0^TE^{n,m}(t)dt\\=&-\frac{1}{2}\bigg[\int_{\Omega}\big(u^{(n)}_{t}(t)-u^{(m)}_{t}(t)\big)\big(u^{(n)}(t)-u^{(m)}(t)\big)dx\bigg]\bigg|^T_0\\
   &+\int_0^T\|u^{(n)}_t(t)-u^{(m)}_t(t)\|^2dt\\
  &+\frac{1}{2}\int_0^T\int_{\Omega}\Big[\big(\Psi(u^{(n)}_{t}(t)-u^{(m)}_{t}(t))\big)\big(u^{(n)}(t)-u^{(m)}(t)\big)\\
  &-\big(f(u^{(n)}(t))-f(u^{(m)}(t))\big)\big(u^{(n)}(t)-u^{(m)}(t)\big)\\
  &-\big(k\|u^{(n)}_{t}(t)\|^pu^{(n)}_{t}(t)-k\|u^{(m)}_{t}(t)\|^pu^{(m)}_{t}(t)\big)\big(u^{(n)}(t)-u^{(m)}(t)\big)\Big]dxdt .
\end{split}
\end{equation}
By Lemma~$\ref{lemma l2.3}$,
{\small\begin{equation}\label{1-10-2}
\int_0^T\int_t^T\int_{\Omega}\big(k\|u^{(n)}_{\tau}(\tau)\|^pu^{(n)}_{t}(\tau)-k\|u^{(m)}_{t}(\tau)\|^pu^{(m)}_{t}(\tau)\big)\big(u^{(n)}_{t}(\tau)-u^{(m)}_{t}(\tau)\big)dxd\tau dt\geq0.
\end{equation}}

Let $0<s<1$.
We infer from $(\ref{4.3})$ that
\begin{equation}\label{1-10-3}
  -\frac{1}{2}\bigg[\int_{\Omega}\big(u^{(n)}_{t}(t)-u^{(m)}_{t}(t)\big)\big(u^{(n)}(t)-u^{(m)}(t)\big)dx\bigg]\bigg|^T_0\leq C_B
\end{equation}
and
\begin{equation}\label{1-10-4}
\begin{split}
  &\int_0^T\int_{\Omega}-\big(k\|u^{(n)}_{t}(t)\|^pu^{(n)}_{t}(t)-k\|u^{(m)}_{t}(t)\|^pu^{(m)}_{t}(t)\big)\big(u^{(n)}(t)-u^{(m)}(t)\big)dxdt\\
  \leq&k\int_0^T\big(\|u^{(n)}_{t}(t)\|^{p+1}+\|u^{(m)}_{t}(t)\|^{p+1}\big)\cdot\|u^{(n)}(t)-u^{(m)}(t)\|dt\\
  \leq   &TC_B\sup_{t\in[0,T]}\|u^{(n)}(t)-u^{(m)}(t)\|\\
 \leq   &TC_B\sup_{t\in[0,T]}\|u^{(n)}(t)-u^{(m)}(t)\|_{H^s(\Omega)}.
\end{split}\end{equation}
By~$(\ref{growth})$~and~$(\ref{4.3})$, we have
\begin{equation}\label{1-10-5}
\begin{split}
 &\|f(u^{(n)}(t))-f(u^{(m)}(t))\|\\=&\left\{\int_\Omega\bigg[\int_0^1f'\big(u^{(m)}(t)+\theta(u^{(n)}(t)-u^{(m)}(t)\big)\big(u^{(n)}(t)-u^{(m)}(t)\big)d\theta\bigg]^2dx\right\}^{\frac{1}{2}}\\
 \leq&C\left\{\int_\Omega\big(|u^{(n)}(t)|^{\frac{4}{N-2}}+|u^{(m)}(t)|^{\frac{4}{N-2}}+1\big)|u^{(n)}(t)-u^{(m)}(t)|^{2}dx\right\}^{\frac{1}{2}}\\
 \leq&C\big(\|u^{(n)}(t)\|_{\frac{2N}{N-2}}^{\frac{2}{N-2}}+\|u^{(m)}(t)\|_{\frac{2N}{N-2}}^{\frac{2}{N-2}}+1\big)\|u^{(n)}(t)-u^{(m)}(t))\|_{\frac{2N}{N-2}}
\end{split}\end{equation} \begin{equation*}
\begin{split} \leq&C\big(\|\nabla u^{(n)}(t)\|^{\frac{2}{N-2}}+\|\nabla u^{(m)}(t)\|^{\frac{2}{N-2}}+1\big)\|\nabla(u^{(n)}(t)-u^{(m)}(t)\|\\
  \leq &C_B.
\end{split}\end{equation*}
Consequently,
\begin{equation}\label{1-10-6}
\begin{split}
&\int_0^T\int_{\Omega}-\big(f(u^{(n)}(t))-f(u^{(m)}(t))\big)\big(u^{(n)}(t)-u^{(m)}(t)\big)dxdt\\
\leq &\int_0^T\|f(u^{(n)}(t))-f(u^{(m)}(t))\|\cdot\|u^{(n)}(t)-u^{(m)}(t)\|dt\\
\leq &TC_B\sup_{t\in[0,T]}\|u^{(n)}(t)-u^{(m)}(t)\|\\
\leq &TC_B\sup_{t\in[0,T]}\|u^{(n)}(t)-u^{(m)}(t)\|_{H^s(\Omega)}.
\end{split}\end{equation}
By Lemma~$\ref{lemma l2.3}$, for any $\epsilon>0$, we have
\begin{equation}\label{1-10-7}
\begin{split}
&\|u^{(n)}_t(t)-u^{(m)}_t(t)\|^2\\
\leq &\frac{\epsilon}{2}+C_{\epsilon}\|u^{(n)}_t(t)-u^{(m)}_t(t)\|^{p+2}\\
\leq&\frac{\epsilon}{2}+C_{\epsilon}k\int_{\Omega}\big(\|u^{(n)}_{t}(t)\|^pu^{(n)}_{t}(t)-\|u^{(m)}_{t}(t)\|^pu^{(m)}_{t}(t)\big)\big(u^{(n)}_t(t)-u^{(m)}_t(t)\big)dx.
\end{split}
\end{equation}
We deduce from~$(\ref{4.3})$, $(\ref{1-9-3})$~and~$(\ref{1-10-7})$~that
\begin{equation}\label{1-10-8}
\begin{split}
&\int_0^T\|u^{(n)}_t(t)-u^{(m)}_t(t)\|^2dt\\
\leq&\frac{\epsilon}{2} T+C_{\epsilon}\bigg\{E^{n,m}(0)-E^{n,m}(T) \\ &+\int_0^T\int_{\Omega}\Big[\big(\Psi(u^{(n)}_{t}(t)-u^{(m)}_{t}(t))\big)\big(u^{(n)}_{t}(t)-u^{(m)}_{t}(t)\big)\\
  &-\big(f(u^{(n)}(t))-f(u^{(m)}(t))\big)\big(u^{(n)}_{t}(t)-u^{(m)}_{t}(t)\big)\Big]dxdt\bigg\}\\
  \leq&\frac{\epsilon}{2} T+C_{\epsilon,B}+C_{\epsilon}\int_0^T\int_{\Omega}\Big[\big(\Psi(u^{(n)}_{t}(t)-u^{(m)}_{t}(t))\big)\big(u^{(n)}_{t}(t)-\\& u^{(m)}_{t}(t)\big)
  -\big(f(u^{(n)}(t))-f(u^{(m)}(t))\big)\big(u^{(n)}_{t}(t)-u^{(m)}_{t}(t)\big)\Big]dxdt.
\end{split}\end{equation}

Plugging~$(\ref{1-9-5})$, $(\ref{1-10-2})$, $(\ref{1-10-3})$, $(\ref{1-10-4})$, $(\ref{1-10-6})$~and~$(\ref{1-10-8})$~into ~$(\ref{1-9-4})$, we obtain
\begin{equation}\label{1-10-9}
\begin{split}
  &E^{n,m}(T)\\\leq&\frac{C_{\epsilon,B}}{T}+\frac{\epsilon}{2}+C_B\sup_{t\in[0,T]}\|u^{(n)}(t)-u^{(m)}(t)\|_{H^s(\Omega)}\\&+C_{\epsilon,B}\frac{1+T}{T}\int_0^T\|\Psi(u^{(n)}_{t}(t)-u^{(m)}_{t}(t))\|dt\\& + \frac{C_{\epsilon}}{T}\Bigg[\left|\int_0^T\int_{\Omega}\big(f(u^{(n)}(t))-f(u^{(m)}(t))\big)\big(u^{(n)}_t(t)-u^{(m)}_t(t)\big)dxdt\right|
  \\&+\left|\int_0^T\int_t^T\int_{\Omega}
  \Big(f(u^{(n)}(\tau))-f(u^{(m)}(\tau))\big)\big(u^{(n)}_{t}(\tau)-u^{(m)}_{t}(\tau)\big)dxd\tau dt\right|\Bigg].
\end{split}
\end{equation}

\noindent
\textbf{Step 2.} Next, we will investigate some convergence properties of the terms on the right in $(\ref{1-10-9})$.

By Alaoglu's theorem and Lemma $\ref{lemma 1-11-1}$, we deduce from $(\ref{4.3})$ and $H^1_0(\Omega)\hookrightarrow\hookrightarrow H^s(\Omega)\hookrightarrow L^2(\Omega)$ that there exists a subsequence of~$\big\{(u^{(n)},u^{(n)}_t)\big\}_{n=1}^{\infty}$, still denoted by~$\big\{(u^{(n)},u^{(n)}_t)\big\}_{n=1}^{\infty}$, such that
\begin{eqnarray}\label{1-13-1}
 \begin{cases}
 (u^{(n)},u^{(n)}_t)\overset{\ast}\rightharpoonup (u,v)\ \  \text{in} \ L^{\infty}(0,T;H^1_0(\Omega)\times L^2(\Omega)),\\
 u^{(n)}\rightarrow w\ \   \text{in }\ C([0,T];H^s(\Omega)),
 \end{cases}\  \ \text{as}\ n\rightarrow\infty.
\end{eqnarray}
Moreover, we can verify that $v=u_t$ and $w=u$.
Indeed, by $(\ref{1-13-1})$, for any~$\phi(s)\in C_c^{\infty}[0,t]$~and any~$\psi_0(x)\in H^2(\Omega)\cap H^1_0(\Omega)$, we have
\begin{equation*}
\begin{split}
  &\int_0^t\big(u^{(n)}_t(s),\phi(s)\triangle\psi_0(x)\big)ds\\=&\int_0^t\phi(s)\frac{d}{dt}\big(u^{(n)}(s),\triangle\psi_0(x)\big)ds\\=&-\int_0^t\phi'(s)\big( u^{(n)}(s),\triangle\psi_0(x)\big)ds\\=&\int_0^t\big( \nabla u^{(n)}(s),\phi'(s)\nabla\psi_0(x)\big)ds\\
\longrightarrow &\int_0^t\big(\nabla u(s),\phi'(s)\nabla\psi_0(x)\big)ds\\ =&\int_0^t\big(u_t(s),\phi(s)\triangle\psi_0(x)\big)ds
  \end{split}
\end{equation*}
and
\begin{equation*}
  \int_0^t\big(u^{(n)}_t(s),\phi(s)\triangle\psi_0(x)\big)ds\longrightarrow \int_0^t\big(v(s),\phi(s)\triangle\psi_0(x)\big)ds
\end{equation*}
as~$n\rightarrow\infty$. It follows that~$v=u_t$.

Since
\begin{equation*}
\begin{split}
  \int_0^T\big(\nabla(u^{(n)}(t)-w),\nabla\varphi\big)dt&= \int_0^T\big(\mathcal{A}^{\frac{s}{2}}(u^{(n)}(t)-w),\mathcal{A}^{1-\frac{s}{2}}\varphi\big)dt\\
 & \leq \sup_{t\in[0,T]}\|u^{(n)}(t)-w\|_{H^{s}(\Omega)}\int_0^T\|\varphi\|_{H^{2-s}(\Omega)}dt
\end{split}
\end{equation*}
holds for any $\varphi\in L^1(0,T;H^{2-s}(\Omega))$, by~$(\ref{1-13-1})$, we have~$\int_0^T\big(\nabla(u^{(n)}(t)-w),\nabla\varphi\big)dt\break\rightarrow 0$~
as~$n\rightarrow\infty$, which together with $(\ref{1-13-1})$ gives $w=u$.

Let $V$ be the completion of $L^2(\Omega)$ with respect to the norm $\|\cdot\|_{V}$ given by
$\|\cdot\|_V=\|\Psi(\cdot)\|+\|\mathcal{A}^{-\frac{1}{2}}\cdot\|$ and $W$ be the completion of $L^2(\Omega)$ with respect to the norm $\|\cdot\|_{W}$ given by $\|\cdot\|_W=\|\mathcal{A}^{-\frac{1}{2}}\cdot\|$. Since~$K\in L^2(\Omega\times\Omega)$, we have
\begin{equation}\label{1-14-2}
 L^2(\Omega)\hookrightarrow\hookrightarrow V\hookrightarrow W.
\end{equation}
Replacing $u^{(m)}(t)$ by $0$ in $(\ref{1-10-5})$ gives $\|f(u^{(n)}(t))-f(0)\|\leq C_B$, i.e., $\|f(u^{(n)}(t))\|\leq C_B$. In addition, it is easy to get
\begin{equation*}
  \|\Psi(u^{(n)}_{t}(t))\|\leq \|K\|_{L^2(\Omega\times\Omega)}\|u^{(n)}_{t}(t)\|\leq C_B.
\end{equation*}
Therefore, from~$(\ref{wave equa})$~we get
\begin{equation*}
\begin{split}
 &\|\mathcal{A}^{-\frac{1}{2}}u^{(n)}_{tt}(t)\|\\
 \leq&\|\nabla u^{(n)}(t)\|+k\|u^{(n)}_{t}(t)\|^p\|\mathcal{A}^{-\frac{1}{2}}u^{(n)}_{t}(t)\|+\|\mathcal{A}^{-\frac{1}{2}}\big(\Psi(u^{(n)}_{t}(t))+h-f(u^{(n)}(t))\big)\|\\
 \leq& C_B.
\end{split}
\end{equation*}
Consequently,
\begin{equation}\label{1-14-1}
   \int_0^T\|\mathcal{A}^{-\frac{1}{2}}u^{(n)}_{tt}(t)\|dt\leq C_{B,T}.
\end{equation}
Besides, we have
\begin{equation}\label{1-14-3}
  \int_0^T\|u^{(n)}_{t}(t)\|dt\leq C_{B,T}.
\end{equation}
By Lemma $\ref{lemma 1-11-1}$, $(\ref{1-14-2})$-$(\ref{1-14-3})$ imply that $\big\{u^{(n)}_{t}(t)\big\}_{n=1}^{\infty}$ is relatively compact in $L^1(0,\break T;V)$. Thus there exists a subsequence of~$\big\{(u^{(n)},u^{(n)}_t)\big\}_{n=1}^{\infty}$~(still denoted by itself) such that
\begin{equation}\label{1-14-4}
 \lim_{n,m\rightarrow\infty}\int_0^T\|\Psi\big(u^{(n)}_{t}(t)-u^{(m)}_{t}(t)\big)\|dt=0.
\end{equation}
In addition, it follows from~$(\ref{1-13-1})$~that
\begin{equation}\label{1-13-15}
  \lim_{n,m\rightarrow\infty}\sup_{t\in[0,T]}\|u^{(n)}(t)-u^{(m)}(t)\|_{H^{s}(\Omega)}=0,
\end{equation}
which together with $(\ref{1-14-4})$ gives
\begin{equation}\label{1-18-1}
\begin{split}
 I_1\equiv&\liminf_{n\rightarrow\infty} \liminf_{m\rightarrow\infty} \big[C_B\sup_{t\in[0,T]}\|u^{(n)}(t)-u^{(m)}(t)\|_{H^s(\Omega)}\\
 &+C_{\epsilon,B}\frac{1+T}{T}\int_0^T\|\Psi(u^{(n)}_{t}(t)-u^{(m)}_{t}(t))\|dt\big]\\
 =&0.
\end{split}
\end{equation}

Let~$F(\mu)=\displaystyle\int_0^{\mu}f(\tau)d\tau$. By~$(\ref{growth})$~and~$(\ref{4.3})$,
\begin{equation}\label{1-13-10}
\begin{split}
 & \left|\int_{\Omega}F(u^{(n)}(t))dx-\int_{\Omega}F(u(t))dx\right|\\
  \leq&\int_{\Omega}\left|\int_0^1f\big(u(t)+\theta(u^{(n)}(t)-u(t))\big)\cdot(u^{(n)}(t)-u(t))d\theta\right|dx\\
\leq & C\int_{\Omega}(|u^{(n)}(t)|^{\frac{N}{N-2}}+|u(t)|^{\frac{N}{N-2}}+1)\cdot|u^{(n)}(t)-u(t)|dx\\
\leq &C\|u^{(n)}(t)-u(t)\|\cdot\big(1+\|u^{(n)}(t)\|_{\frac{2N}{N-2}}^{\frac{N}{N-2}}+\|u(t)\|_{\frac{2N}{N-2}}^{\frac{N}{N-2}}\big)\\
\leq &C\|u^{(n)}(t)-u(t)\|_{H^{s}(\Omega)}\big(1+\|\nabla u^{(n)}(t)\|^{\frac{N}{N-2}}+\|\nabla u(t)\|^{\frac{N}{N-2}}\big)\\
\leq &C_B\|u^{(n)}(t)-u(t)\|_{H^{s}(\Omega)}
\end{split}
\end{equation}
holds for all~$t\geq0$.

Combining $(\ref{1-13-1})$ and $(\ref{1-13-10})$ gives
\begin{equation}\label{1-13-11}
  \int_{\Omega}F(u^{(n)}(t))dx\rightrightarrows\int_{\Omega}F(u(t))dx\ \text{as}\ n\rightarrow\infty.
\end{equation}
It follows from~$H^{N}(\Omega)\hookrightarrow L^{\infty}(\Omega)$~that~$L^1(\Omega)\hookrightarrow (L^{\infty}(\Omega))^{*}\hookrightarrow H^{-N}(\Omega)$. Hence we deduce from $(\ref{growth})$ and $(\ref{4.3})$ that
\begin{equation}\label{1-13-4}
\begin{split}
  &\|\mathcal{A}^{-\frac{N}{2}}f(u^{(n)}(t))-\mathcal{A}^{-\frac{N}{2}}f(u(t))\|\\[2mm]
= &\|f(u^{(n)}(t))-f(u(t))\|_{H^{-N}(\Omega)}\\[2mm]
\leq &C\|f(u^{(n)}(t))-f(u(t))\|_{1}\\[2mm]
\leq &C\int_{\Omega}\left|\int_0^1f'\big(\theta u^{(n)}(t)+(1-\theta)u(t)\big)\big(u^{(n)}(t)-u(t)\big)d\theta\right|dx\\[2mm]
\leq &C\int_{\Omega}(|u^{(n)}(t)|^{\frac{2}{N-2}}+|u(t)|^{\frac{2}{N-2}}+1)\cdot|u^{(n)}(t)-u(t)|dx\\[2mm]
\leq &C\|u^{(n)}(t)-u(t)\|\cdot(1+\|u^{(n)}(t)\|_{\frac{4}{N-2}}^{\frac{2}{N-2}}+\|u(t)\|_{\frac{4}{N-2}}^{\frac{2}{N-2}})\\[2mm]
\leq &C\|u^{(n)}(t)-u(t)\|_{H^{s}(\Omega)}(1+\|\nabla u^{(n)}(t)\|^{\frac{2}{N-2}}+\|\nabla u(t)\|^{\frac{2}{N-2}})\\[2mm]
\leq &C_B\|u^{(n)}(t)-u(t)\|_{H^{s}(\Omega)}
\end{split}
\end{equation}
holds for all~$t\geq0$.

 Combining~$(\ref{1-13-1})$~and~$(\ref{1-13-4})$~gives
\begin{equation}\label{20-11-28-1}
  \sup_{t\in[0,T]}\|\mathcal{A}^{-\frac{N}{2}}\big(f(u^{(n)}(t))-f(u(t))\big)\|\longrightarrow0 \ \text{as}\ n\rightarrow\infty.
\end{equation}
For each fixed~$t\in[0,T]$~and each~$\varphi\in L^1\big(0,T;H^N(\Omega)\cap H^1_0(\Omega)\big)$, we have
\begin{equation*}
\begin{split}
  &\int_t^T\big(f(u^{(n)}(\tau))-f(u(\tau)),\varphi\big)d\tau\ \  \\
  =&\int_t^T\big(\mathcal{A}^{-\frac{N}{2}}(f(u^{(n)}(\tau))-f(u(\tau))),\mathcal{A}^{\frac{N}{2}}\varphi\big)d\tau\\
  \leq &\sup_{\tau\in[0,T]}\|\mathcal{A}^{-\frac{N}{2}}\big(f(u^{(n)}(\tau))-f(u(\tau))\big)\|\int_0^T\|\varphi\|_{H^N(\Omega)}d\tau,
\end{split}
  \end{equation*}
  which, together with~$(\ref{20-11-28-1})$, gives
  \begin{equation}\label{1-13-6}
    \int_t^T\big(f(u^{(n)}(\tau))-f(u(\tau)),\varphi\big)d\tau\longrightarrow 0\ \text{as}\ n\rightarrow\infty.
  \end{equation}
 Since~$L^1\big(t,T;H^N(\Omega)\cap H^1_0(\Omega)\big)$~is dense in ~$L^1\big(t,T;L^{2}(\Omega)\big)$, $(\ref{1-13-6})$~implies
   \begin{equation}\label{1-13-7}
f(u^{(n)})\overset{\ast}\rightharpoonup f(u)\ \text{in}\ L^{\infty}\big(t,T;L^2(\Omega)\big)\ \text{as}\ n\rightarrow\infty.
\end{equation}
By~$(\ref{1-13-1})$, we have
\begin{equation}\label{1-15-1}
(u^{(n)},u^{(n)}_t)\overset{\ast}\rightharpoonup (u,u_t)\ \ \text{in} \ L^{\infty}\big(t,T;H^1_0(\Omega)\times L^2(\Omega)\big) \ \text{as}\ n\rightarrow\infty.
\end{equation}
From~$(\ref{1-13-7})$~and~$(\ref{1-15-1})$, we obtain
\begin{equation}\label{1-13-8}
\begin{split}
  &\lim_{n\rightarrow\infty}\lim_{m\rightarrow\infty}\int_t^T\int_{\Omega}
  f(u^{(n)}(\tau))u^{(m)}_{t}(\tau)dxd\tau
\end{split}\end{equation}\begin{equation*}
\begin{split}   =&\lim_{n\rightarrow\infty}\int_t^T\int_{\Omega}
  f(u^{(n)}(\tau))u_{t}(\tau)dxd\tau\\
=&\int_t^T\int_{\Omega}
  f(u(\tau))u_{t}(\tau)dxd\tau\\
=&\int_{\Omega}F(u(T))dx-\int_{\Omega}F(u(t))dx
\end{split}\end{equation*}
and
\begin{equation}\label{1-13-9}
\begin{split}
  \lim_{n\rightarrow\infty}\lim_{m\rightarrow\infty}\int_t^T\int_{\Omega}
  f(u^{(m)}(\tau))u^{(n)}_{t}(\tau)dxd\tau =\int_{\Omega}F(u(T))dx-\int_{\Omega}F(u(t))dx.
\end{split}\end{equation}
We deduce from~$(\ref{1-13-11})$, $(\ref{1-13-8})$~and ~$(\ref{1-13-9})$~that
\begin{equation}\label{1-13-12}
\begin{split}
  &\lim_{n\rightarrow\infty}\lim_{m\rightarrow\infty}\int_t^T\int_{\Omega}
  \big(f(u^{(n)}(\tau))-f(u^{(m)}(\tau))\big)\cdot\big(u^{(n)}_{t}(\tau)-u^{(m)}_{t}(\tau)\big)dxd\tau\\=&\lim_{n\rightarrow\infty}\lim_{m\rightarrow\infty}\Big[\int_{\Omega}F(u^{(n)}(T))dx-\int_{\Omega}F(u^{(n)}(t))dx+\int_{\Omega}F(u^{(m)}(T))dx\\& \qquad\qquad\quad-\int_{\Omega}F(u^{(m)}(t))dx-\int_t^T\int_{\Omega}
  f(u^{(m)}(\tau))u^{(n)}_{t}(\tau)dxd\tau\\&\qquad\qquad\quad-\int_t^T\int_{\Omega}
  f(u^{(n)}(\tau))u^{(m)}_{t}(\tau)dxd\tau \Big]\\=&0
\end{split}\end{equation}
for all~$t\in [0,T]$.

Due to~$(\ref{4.3})$~and~$(\ref{1-10-5})$,
\begin{equation}\label{1-13-13}
  \left|\int_t^T\int_{\Omega}
  \Big(f(u^{(n)}(\tau))-f(u^{(m)}(\tau))\Big)\cdot\Big(u^{(n)}_{t}(\tau)-u^{(m)}_{t}(\tau)\Big)dxd\tau\right|\leq C_{B,T}.
\end{equation}
By Lebesgue's dominated convergence theorem, combining~$(\ref{1-13-12})$~and~$(\ref{1-13-13})$~yields
\begin{equation}\label{1-13-14}
  \lim_{n\rightarrow\infty}\lim_{m\rightarrow\infty}\int_0^T\int_t^T\int_{\Omega}
  \Big(f(u^{(n)}(\tau))-f(u^{(m)}(\tau))\Big)\cdot\Big(u^{(n)}_{t}(\tau)-u^{(m)}_{t}(\tau)\Big)dxd\tau dt=0.
\end{equation}
It follows from~$(\ref{1-13-12})$~and~$(\ref{1-13-14})$~that
\begin{equation}\label{1-18-8}
\begin{split}
  I_2\equiv&\lim_{n\rightarrow\infty}\lim_{m\rightarrow\infty}\Bigg\{\frac{C_{\epsilon}}{T}\Bigg[\left|\int_0^T\int_{\Omega}\Big(f(u^{(n)}(t))-f(u^{(m)}(t))\Big)\Big(u^{(n)}_t(t)-u^{(m)}_t(t)\Big)dxdt\right|
 \\&+\left|\int_0^T\int_t^T\int_{\Omega}
  \Big(f(u^{(n)}(\tau))-f(u^{(m)}(\tau))\Big)\Big(u^{(n)}_{t}(\tau)-u^{(m)}_{t}(\tau)\Big)dxd\tau dt\right|\Bigg]\\
  &+\frac{C_{\epsilon,B}}{T}+\frac{\epsilon}{2}\Bigg\}\\=&\frac{C_{\epsilon,B}}{T}+\frac{\epsilon}{2}.
\end{split}
\end{equation}
We deduce from~$(\ref{1-10-9})$, ~$(\ref{1-18-1})$~and~$(\ref{1-18-8})$~that
\begin{equation*}
\begin{split}
\liminf_{m\rightarrow\infty} \liminf_{n\rightarrow\infty}E^{n,m}(T)\leq I_1+I_2=\frac{C_{\epsilon,B}}{T}+\frac{\epsilon}{2}\leq\epsilon
\end{split}\end{equation*}
for $T\geq\frac{2C_{\epsilon,B}}{\epsilon}$, which implies
\begin{equation*}
 \liminf_{m\rightarrow\infty} \liminf_{n\rightarrow\infty}\left\|\big(u^{(n)}(T),u^{(n)}_t(T)\big)-\big(u^{(m)}(T),u^{(m)}_t(T)\big)\right\|_{H^1_0(\Omega)\times L^2(\Omega)}\leq \sqrt{2\epsilon}.
 \end{equation*}
 Consequently, by Lemma~$\ref{lemma 1-9-1}$, the dynamical system generated by problem $(\ref{wave equa})$-$(\ref{initial condition})$ is asymptotically smooth. In addition,Theorem~$\ref{dissipativity theom}$~states that it is also dissipative. Thus by Lemma~$\ref{lemma 1-14-10}$~it possesses a global attractor.
\end{proof}

 \section*{Acknowledgment}
 The authors of this paper would like to express their
sincere thanks to the  referee for valuable comments.









\end{document}